\def\reff#1{(\ref{#1})}
\def\T{^\top}
\def\eps{\varepsilon}
\def\R{{\mathbb R}}
\newtheorem{thm}{Theorem}[section]
\newtheorem{lemma}{Lemma}[section]
\newtheorem{corollary}{Corollary}[section]
\newtheorem{remark}{Remark}[section]
\newtheorem{proposition}{Proposition}[section]
\newenvironment{proof}{\noindent {\bf Proof. }}{\hfill $\Box$ \\ \medskip}
\def\qf#1#2{#1\T  #2 #1}
\def\bd{\mbox{\rm bd}\,}
\def\diag{\mbox{\rm diag}\,}
\def\conv{\mbox{\rm conv\,}}
\def\tr{\mbox{\rm trace}\,}
\def\rank{\mbox{\rm rank}\,}
\def\ext{\mbox{\rm ext}\,}
\def\beq#1{\begin{equation}\label{#1}}
\def\eeq{\end{equation}}
\def\bep{\begin{proof}}
\def\ep{\end{proof}}
\def\lk{\left\{}
\def\rk{\right \} }
\def\norm#1{ \| #1 \|}
\def\cs#1{{{\cal C}_{#1}^*}}
\def\cc#1{{\cal C}_{#1}}
\def\ca{{\cal A}}
\def\ckk{{\cal K}}
\def\csn#1{{{\cal S}_{#1}}}
\def\csnp{\PP_n}
\def\x{\mathbf x}
\def\y{\mathbf y}
\def\z{\mathbf z}
\def\u{\mathbf u}
\def\v{\mathbf v}
\def\w{\mathbf w}
\def\o{\mathbf o}
\def\e{\mathbf e}
\def\PP{{\mathcal P}}
\def\NN{{\mathcal N}}
\def\LL{{\mathcal L}}
\def\FF{{\mathcal F}}
\def\BB{{\mathcal B}}
\def\inn#1{{\rm int}\,{#1}}
\def\supp#1{\sigma (#1)}
\def\cpr{\mbox{\rm cpr}\,}
\def\ignore#1{}
\def\corr#1{#1}
\def\cor#1{#1}
\def\cob#1{#1}
\definecolor{purple}{rgb}{0.5,0,0.5}
\def\cop#1{#1}
\definecolor{dgreen}{rgb}{0,0.7,0}
\def\cogg#1{#1}
\def\cog#1{#1}
\def\cobf#1{#1}
\def\copa#1{#1}
\def\bea#1{\begin{array}{#1}}
\def\ea{\end{array}}
\begin{document}
\begin{titlepage}
{\Large \raggedright
New results on the cp rank and\\[1em]
related properties of co(mpletely~)positive matrices}

\bigskip
\begin{flushleft}
\normalsize{
{\bf Naomi Shaked-Monderer}\footnotemark \hfill{Emek Yezreel College, 
Israel}\\ [2ex]
{\bf Abraham Berman}\footnotemark[1] \hfill {Dept.~of Mathematics, Technion, Haifa, Israel}\\ [2ex]
{\bf Immanuel M. Bomze} \hfill {ISOR, University of Vienna, Austria}\\ [2ex]
{\bf Florian Jarre} \hfill {Mathem.~Institut, University of D\"usseldorf, Germany} \\ [2ex]
{\bf Werner Schachinger} \hfill {ISOR, University of Vienna, Austria}
}

\end{flushleft}
\rule{\textwidth}{0.75pt} \vskip0.4cm
\begin{abstract}
\normalsize
Copositive  and completely positive  matrices play an increasingly important role in Applied Mathematics,
namely as a key concept for approximating NP-hard optimization problems. The cone  of copositive matrices
of a given order  and the cone of completely positive  matrices  of  the same order  are dual to each other
with respect to  the standard scalar product on the space of symmetric matrices. This paper establishes some
new relations between orthogonal pairs of such matrices lying on the boundary of either cone. As a consequence,
we can establish an improvement on the upper bound of the cp-rank of completely positive matrices of general
order, and a further improvement for such matrices of order six.
\end{abstract}
\bigskip

\noindent
{\bf Key words:}
copositive optimization, completely positive matrices, cp-rank, nonnegative factorization.

\bigskip
\noindent
AMS classification: 15B48, 90C25, 15A23

\bigskip
\bigskip

\centerline{\today}

\footnotetext[1]{The work of Abraham Berman and  Naomi Shaked-Monderer was supported by grant no.\ G-18-304.2/2011 by the German-Israeli Foundation for Scientific Research and Development (GIF).}

\end{titlepage}
\newpage
\setcounter{page}{1}

\section{Introduction}

In this article we consider completely positive matrices and their  cp-rank, as well as copositive matrices.
An $n\times n$ matrix $M$ is  said to be {\em completely positive} if there exists a nonnegative  (not necessarily square) matrix $V$ such that $M=VV\T$. An $n\times n$ matrix $A$ is  said to be {\em copositive} if $\qf{\x} A  \ge 0$ for every nonnegative vector  $\x\in \R^n_+$.
The completely positive matrices of order $n$ form a cone, $\cs n$, dual to the cone of copositive matrices of that order, $\cc n$. Both cones are central in the rapidly evolving field of {\em copositive optimization} which links discrete and continuous optimization, and has
numerous real-world applications. For recent surveys and structured bibliographies, we refer to~\cite{Bomz11a,Bomz11b,Bure11,Duer10}, and for a fundamental text book to~\cite{Berm03}.

A main motivation for this paper was the study of cp-rank:
A given completely positive matrix $M\!\!\ne\!0$ always has many factorizations~$M\!\!=\!\!VV\T$, where $V$ is a nonnegative matrix, and
the {\em cp-rank} of $M$, $\cpr M$, is the minimum number of columns in such a nonnegative factor $V$  (for completeness, we define $\cpr M = 0$ if $M$ is a square zero matrix and $\cpr M = \infty$ if $M$ is not completely positive).
Determining the maximum possible cp-rank of $n\times n$ completely positive matrices,
$$p_n := \max\lk \cpr M : M \mbox{ is a completely positive }n\times n\mbox{ matrix}\rk ,$$
is still an open problem for  large $n$
 (up to now, for $n\ge 6$; only recently $p_5=6$ has been established~\cite{Shak13}).
It is known \cite[Theorem~3.3]{Berm03} that
\beq{pnsmall}p_n=n\quad\mbox{if }n\le 4 \, .\eeq
For $n\in \lk 2,3\rk$, there exist simple proofs of~(\ref{pnsmall}), but already for $n=4$, the argument is quite involved~\cite{Berm03}.
For $n\ge 5$, it is known that
\beq{pnbracket} {\cob{ d_n :=}}\left\lfloor \frac{n^2}4\right\rfloor \le p_n \le {n+1\choose 2}-1\, ,\eeq
but whether the lower bound {\cob{$d_n$}} is in fact equal to $p_n$ is still unknown for large $n$. This is the famous Drew-Johnson-Loewy (DJL) conjecture~\cite{Drew94}. The above upper bound on $p_n$ on the right-hand side
\cobf{follows, for example,} 
from the so-called Barioli-Berman~\cite{Bari03} bound:
Let
$$b_r :=\max\lk \cpr M : M \mbox{ is a completely positive  matrix with }\rank M = r\rk ,$$
then for $r\ge 3$
\beq{babe} b_r  = {r+1\choose 2}-1 \, .\eeq
{Some evidence in support of the DJL conjecture is found in
\cite{Drew94,Drew96a,Berm98,Loew03}, see also \cite[Section 3.3]{Berm03}.}
The DJL conjecture has recently been proved for $n=5$ \cite{Shak13}, but
the cp-rank problem is still not fully resolved. Not only is it  not known whether the DJL conjecture holds, but the best upper bound on $p_n$ for $n\ge 6$ remained, for over a decade, $b_n$. Two main results of this paper are a reduction of the upper bound on $p_n$ in  the bracket~\reff{pnbracket} for general $n$ and a further reduction in case of $n=6$. To obtain  these results, we use
 \cite[Thm.\,3.4]{Shak13}, which guarantees that $p_n$ is attained (also) at a nonsingular matrix on the boundary of the cone of $\cs n$. We also complement this result here by studying,  for every possible cp-rank $1\le k\le p_n$, where in $\cs n$ the cp-rank $k$ is attained.

 Each matrix on the boundary of the cone $\cs n$ is orthogonal to a matrix on the boundary of the cone $\cc n$ (in fact, to a matrix generating an extreme ray of that cone). Thus to improve the bound on $p_n$ we consider pairs of matrices,
 $M\in \cs n$ and $A\in \cc n$, that are orthogonal to each other  in the standard scalar product of matrices. This leads also to some results that are not directly related to the cp-rank problem, and are of interest in their own right.

The paper is organized as follows: after introducing basic concepts and terminology, we show, in Section~\ref{orthdiagdom}, some important orthogonality and diagonal dominance results. Section~\ref{extlow} is devoted to the study of extreme copositive matrices of low rank, while Section~\ref{genercpr} considers
completely positive matrices  having a fixed cp-rank: where in the completely positive cone they can be found, and whether they form a substantial part of the cone. Section~\ref{newcprbounds} presents improvements of upper bounds on the cp-rank for matrices of general order, and a further tightening of this bound for order six is put forward in the final Section~\ref{psix}.

 Some notation and terminology:
let $\e_i$ be  the $i$th column vector of the $n\times n$ identity matrix $I_n$.
The nonnegative orthant is denoted by $\R^n_+$. For a vector $\x\in \R^n_+$, the {\em support}  of $\x$ is denoted by $\sigma( \x)$, i.e.,
$$
\sigma( \x) = \lk i : x_i > 0\rk .
$$
The set of  nonnegative \corr{$n\times p$} matrices is denoted by $\R^{n\times p}_+$. \copa{A matrix $A\in \R^{n\times p}_+$ is called {\em positive} if   $\min\limits_{i,j}A_{ij}>0$. (Note that a completely positive matrix is not necessarily positive, since it may have zero entries.)
A matrix $A\in \R^{n\times n}$ is {\em diagonally dominant} if $|A_{ii}|\ge \sum\limits_{j\ne i} |A_{ij}|$ for every $1\le i\le n$, and is
{\em strictly diagonally dominant} if all these $n$ inequalities are strict.
For   two square matrices $A,B$} \copa{we denote}
$$A\oplus B =\left[\bea{cc} A & 0 \\ 0\T &B\ea\right]\, , $$
where $0$ is a suitable (possibly rectangular) zero matrix.

By $\csn n $ we denote the space of real symmetric $n\times n$ matrices, and by $\csnp$  the cone of  positive-semidefinite matrices,
$\csnp=\{X\in\csn n  :  X \succeq 0\}$. \copa{The cone of  nonnegative matrices in $\csn n$ is denoted by $\NN _n$, i.e.,}
\corr{$\NN_n=\csn n \cap \R^{n\times n}_+$.} The scalar product of two
matrices $U,V$ of same  order  is
$\langle U,V\rangle := \tr(U\T V) = \sum_{i,j} U_{ij} V_{ij}$. If $V=[\v_1, \ldots ,\v_p]\in \R_+^{n\times p}$, then  the factorization $M=VV^\top$ is equivalent to $M=\sum_{i=1}^p \v_i\v_i^\top$.
We refer to this sum as a {\em cp decomposition}.  When $p=\cpr M$ we say that the cp decomposition is \emph{minimal} (the cp factorization is minimal).

By $\inn {K}$ we denote the {\em \ignore{relative}interior} of a \ignore{convex}   set ${K}\subseteq \csn n$, $\bd K$ is the {\em boundary} of that set.
For a convex cone $K$, $\ext K$ denotes the set of all elements in $K$ which generate extreme rays of $K$.

Both the copositive cone $\cc n$ and the completely positive cone $\cs n$,
are pointed closed convex cones with nonempty interior.
As mentioned above, the copositive cone $\cc n$ and, in particular, its extremal rays, are important for the study of the cp-rank as any matrix on the boundary $\bd \cs n$ of $\cs n$ is orthogonal to an extremal ray of $\cc n$.
However, characterization of the extremal rays of $\cc n$ for $n> 5$ is itself a major open problem
in the study of $\cc n$. The explicit characterization of extremal rays of $\cc 5$ was completed by Hildebrand \cite{Hild11} only recently, and this work was essential for the arguments in~\cite{Shak13}.  One extremal ray of $\cc 5$ is generated by the so-called \emph{Horn matrix}
\beq{Horn} H=\left[\begin{array}{rrrrr}1&-1&1&1&-1\\-1&1&-1&1&1\\1&-1&1&-1&1\\1&1&-1&1&-1\\-1&1&1&-1&1\end{array}\right]\, ,\eeq
which historically was the first copositive matrix detected outside of $\PP_n+\NN_n$ (here $n=5$), see~\cite{Dian62}; attribution to Alfred Horn can be found in~\cite{Hall63}. In the sequel, a matrix $A$ is said to be {\em in the orbit} of a matrix $B$ if
$A= DP\T BPD$, where
$D$ is a positive-definite diagonal matrix and $P$ a permutation matrix. The {\em Horn orbit} consists of all matrices in the orbit of $H$; obviously, each matrix in the Horn orbit also generates an extremal ray of $\cc 5$. Finally, we address any extremal matrix in $\cc 5$ which is neither  in the Horn orbit  nor  in $\PP_5\cup \NN_5$ as a {\em Hildebrand matrix}; see description in~\cite{Hild11}.

\section{Orthogonality and diagonal dominance results}\label{orthdiagdom}
In this section we consider copositive and completely positive matrices
which are orthogonal to each other.
\cobf{The following theorem will be used in this section
to point out a property of matrices on the boundary of the copositive cone,
and also later to reduce the upper bound for $p_n$.}

\begin{thm}\label{orth}
Let $M\in \cs n$ be orthogonal to $A\in \cc n$, and let $M=\sum_{j=1}^p\x_j\x_j\T$ be any cp \cog{decomposition} of $M$.
Then
\begin{itemize}
\item[{\rm(a)}] For every $1\le i\le n$ the $i$-th column of $M$ is orthogonal to the $i$-th column of $A$.
\item[{\rm(b)}] If $i\in \supp {\x_j}$ for every $1\le j\le p$, then the $i$-th column of $A$ is in the nullspace of $M$.
\end{itemize}
\end{thm}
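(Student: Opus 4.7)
The plan is to exploit the cp decomposition together with the copositivity of $A$ to reduce everything to a single well-known observation about zeros of copositive quadratic forms on the nonnegative orthant.

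First I would unpack the orthogonality condition using the given cp decomposition. Writing
$$0 \;=\; \langle M,A\rangle \;=\; \sum_{j=1}^p \langle \x_j\x_j\T , A\rangle \;=\; \sum_{j=1}^p \x_j\T A\x_j \, ,$$
each summand is nonnegative because $A$ is copositive and $\x_j\in\R^n_+$, so every individual summand must vanish: $\x_j\T A\x_j=0$ for each $j$.

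Next I would establish the key lemma (a standard KKT-type fact that deserves to be stated explicitly): \emph{if $A$ is copositive, $\x\in\R^n_+$, and $\x\T A\x=0$, then $(A\x)_i=0$ for every $i\in\supp{\x}$.} The argument is a one-variable perturbation: for $i\in\supp{\x}$ we have $x_i>0$, so $\x\pm t\e_i\in\R^n_+$ for all sufficiently small $t>0$; copositivity gives $(\x\pm t\e_i)\T A(\x\pm t\e_i)\ge 0=\x\T A\x$, and expanding in $t$ forces the linear coefficient $2\e_i\T A\x=2(A\x)_i$ to vanish. Applied to each $\x_j$, this yields $(A\x_j)_i=0$ whenever $i\in\supp{\x_j}$.

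With this in hand, both parts become short computations using the symmetry of $M$ and $A$. For (a), the inner product of the $i$-th columns is
$$\e_i\T M A\e_i \;=\; \sum_{j=1}^p \e_i\T \x_j\,\x_j\T A\e_i \;=\; \sum_{j=1}^p x_{j,i}\,(A\x_j)_i \, .$$
In each term either $x_{j,i}=0$ (so the term vanishes directly) or $i\in\supp{\x_j}$ (so the lemma gives $(A\x_j)_i=0$); either way the whole sum is $0$. For (b), the hypothesis $i\in\supp{\x_j}$ for every $j$ upgrades this to $(A\x_j)_i=0$ for \emph{all} $j$, so
$$M A\e_i \;=\; \sum_{j=1}^p \x_j\,\x_j\T A\e_i \;=\; \sum_{j=1}^p (A\x_j)_i\,\x_j \;=\; \mathbf 0 \, ,$$
which says exactly that $A\e_i$ lies in the nullspace of $M$.

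There is no real obstacle here; the only point that requires care is the perturbation step in the lemma, where one must observe that for $i\in\supp{\x}$ one is free to move in both directions $\pm\e_i$ while staying in $\R^n_+$, so that $\x$ is an unconstrained local minimizer of $t\mapsto(\x+t\e_i)\T A(\x+t\e_i)$ near $t=0$ and the first-order condition may be applied.
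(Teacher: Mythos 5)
Your proof is correct and follows essentially the same route as the paper: the same expansion of $\e_i\T MA\e_i$ over the cp decomposition, combined with the fact that $[A\x_j]_i=0$ for $i\in\supp{\x_j}$. The only difference is that you derive $\x_j\T A\x_j=0$ explicitly from orthogonality and prove the supporting lemma by a perturbation argument, whereas the paper takes the first step as understood and cites the lemma from \cite[Rem.\,3.2, (3.1)]{Shak13}.
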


\begin{proof}
The scalar product of the $\ell$-th column of $M$ and the $i$-th column of $A$ is
\beq{MiAi} \e_\ell\T MA\e_i =\sum_{j=1}^p  \e_\ell\T\x_j \x_j\T A\e_i=\sum_{j:\ell\in \supp {\x_j}}\left(\e_\ell\T\x_j\right)\left [A\x_j\right]_i\, .
\eeq
(a) If $\ell=i$, the right-hand side in (\ref{MiAi}) is
$$\sum_{j:i\in \supp {\x_j}} \left(\e_i\T\x_j\right)\left [A\x_j\right]_i\, .$$
Since each \cog{$\x_j$ is  in $\R^n_+$} and satisfies $\qf{\x_j}A=0$ we have $\left[A\x_j\right]_k =0$ for all
$k\in \supp {\x_j}$~\cite[Rem.\,3.2, (3.1)]{Shak13}. In particular  $\left[A\x_j\right]_i=0 $ for all $j$'s in the sum above. Hence
$\e_i\T MA\e_i  =0$.\newline
(b) Let $\z=A\e_i$.
Suppose $i\in \supp {\x_j}$ for all $j\in\lk 1,\ldots ,p\rk$, then as above $\left[A\x_j\right]_i=0 $ for every $j$, and thus by~\reff{MiAi}
\[\left[M\z\right]_\ell=\e_\ell\T MA\e_i = \sum_{j=1}^p  \left (\e_\ell\T\x_j\right)\left[A\x_j\right]_i=\,   0\]
for every $\ell \in\lk 1, \ldots , n\rk $.
\end{proof}

\begin{remark} If $M=\x\x\T\in \cs n$ is of  rank one, both conditions~(a) and (b) above are in fact equivalent. Indeed, (a) says that the diagonal entries of $AM = (A\x)\x\T $, i.e.,
$x_i [A\x]_i = 0$ for all i. In other words, if $x_i>0$, then $[A\x]_i=0$. Whereas (b) states that for all $x_i>0$, we have
$[A\x]_i \x = \x\x\T A \e_i = M(A\e_i) = \o$, which, as $\x\neq \o$, is equivalent, again, to $[A\x]_i=0$.
\end{remark}

{Before we proceed, we note an interesting implication about {\em copositive} matrices on the boundary $\bd \cc n$. It is well known, and obvious by a \cobf{Gershgorin-type}
argument, that singular matrices (e.g. those on $\bd {\mathcal P}_n$) cannot be {strictly} diagonally dominant.} \ignore{Since symmetric strictly diagonally dominant matrices are positive-definite, nonsingular matrices in $\bd {\mathcal P}_n$ are also not strictly diagonally dominant.} \cobf{For matrices on $\bd \cc n$ we show that some form of ``anti- diagonal dominance'' can be established}:

\begin{corollary}
Let $A\in \bd \cc n$.  If $A\perp M\in \cs n$ and $M$ has no zero rows, then $A$ is in the orbit of some $\bar A\in \bd \cc n$ which satisfies $$\bar A_{ii} \le \sum_{j\neq i} | \bar A_{ij}|\quad \mbox{for all }i\, .$$
\end{corollary}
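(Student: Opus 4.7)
The plan is to apply Theorem~\ref{orth}(a) and then perform a positive diagonal rescaling. By Theorem~\ref{orth}(a), for every $1\le i\le n$ the $i$-th column of $M$ is orthogonal to the $i$-th column of $A$, i.e.\ $\sum_k M_{ki}A_{ki}=0$. Isolating the diagonal term gives
\[
 M_{ii}A_{ii}\;=\;-\sum_{k\ne i}M_{ki}A_{ki}\,,
\]
and since $M$ is entrywise nonnegative this immediately yields
\[
 M_{ii}A_{ii}\;\le\;\sum_{k\ne i}M_{ki}|A_{ki}|\qquad\mbox{for every }1\le i\le n\,.
\]

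Next I would introduce $\bar A:=DAD$ with $D:=\diag(\sqrt{M_{11}},\ldots,\sqrt{M_{nn}})$. This requires $M_{ii}>0$ for all $i$, which follows from the hypothesis that $M$ has no zero rows together with the fact that a completely positive matrix with $M_{ii}=0$ must have its entire $i$-th row equal to $\o$. Clearly $\bar A$ is in the orbit of $A$ (take $P=I_n$), and since the orbit relation is symmetric (invert $D$, transpose $P$), $A$ is in the orbit of $\bar A$ as well. Moreover $\bar A\in\bd\cc n$, because the map $X\mapsto DXD$ is a continuous bijection on $\cc n$ with continuous inverse and therefore preserves $\bd\cc n$.

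It remains to check the desired inequality for $\bar A$. Since $\bar A_{ii}=M_{ii}A_{ii}$ and $\sum_{j\ne i}|\bar A_{ij}|=\sqrt{M_{ii}}\sum_{j\ne i}\sqrt{M_{jj}}|A_{ij}|$, dividing the last display above by $\sqrt{M_{ii}}>0$ reduces the claim to
\[
 \sqrt{M_{ii}}\,A_{ii}\;\le\;\sum_{k\ne i}\frac{M_{ki}}{\sqrt{M_{ii}}}|A_{ki}|\;\le\;\sum_{k\ne i}\sqrt{M_{kk}}\,|A_{ki}|\,,
\]
where the second inequality uses the positive-semidefinite bound $M_{ki}\le\sqrt{M_{kk}M_{ii}}$, valid because every $2\times 2$ principal submatrix of $M\in\cs n\subseteq\csnp$ is positive semidefinite. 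The only step requiring any insight is spotting that this particular $\sqrt{M_{ii}}$-scaling is what makes the column-orthogonality identity from Theorem~\ref{orth}(a) interact cleanly with the Cauchy--Schwarz-type bound on off-diagonal entries of $M$; everything else is routine.
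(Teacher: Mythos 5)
Your proof is correct and takes essentially the same route as the paper's: your rescaling $\bar A = DAD$ with $D$ the diagonal matrix of the $\sqrt{M_{ii}}$ is precisely the paper's normalization of $M$ to unit diagonal, and your bound $M_{ki}\le\sqrt{M_{kk}M_{ii}}$ is the same Cauchy--Schwarz step the paper applies to the Gram vectors of the normalized $M$. Both arguments combine Theorem~\ref{orth}(a) with this single inequality, so nothing essential differs.
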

\bep {If $A=0$, this is trivial. Suppose  $0\neq A\in \bd \cc n$.  Since the completely positive matrix $M$ has no zero row by assumption, all its diagonal elements are positive. Thus we may scale $M$ by a positive definite diagonal matrix $D$  so that $\overline  M=DMD$ has $\diag \overline  M = \e$. Then
$\overline  A=D^{-1}AD^{-1}$ satisfies $\overline  A \perp \overline  M$. We may
 choose $\u_i \in \R^p_+$ where $p=\cpr \overline  M$, $1\le i \le n$, such that $\overline  M_{ij} = \u_i\T \u_j$ (in other words, $\overline  M$ is the Gram matrix of   $\u_1, \ldots , \u_n$).   For every $i$ we get from~Theorem~\ref{orth}(a) and  $\norm {\u_i}=1$ that
$$ \sum_{j\neq i}\overline  A_{ij} \u_i\T \u_j = (\overline  A\e_i)\T (\overline  M\e_i) - \overline  A_{ii}  =  -\overline  A_{ii} \, . $$
Passing to absolute values and applying Cauchy-Schwarz as well as the triangle inequality, we get
$$\overline  A_{ii} \le  \sum_{j\neq i} |\overline  A_{ij}| $$
for all $i$, the claimed assertion.}\ep

According to a result by Kaykobad~\cite{Kayk87}, any symmetric diagonally dominant matrix in ${\mathcal N}_n$ is already completely positive.
Motivated by this result, we could ask whether indeed these matrices are in the interior of $\cs n$. The answer is negative, a certificate being $I_n$\cogg{:
matrices in the interior of $\cs n$ are necessarily positive. In general, being nonsingular and positive  is  not a sufficient condition for an $n\times n$  completely positive matrix $A$ to be in $\inn{{ \cs n }} $, for instance, for some $A\perp H$ in~\reff{Horn}.} However, we can prove the following:

\begin{thm}\label{intddom}
\cogg{Let $M$ be an $n\times n$ symmetric matrix, $n\ge  3 $. If $M$ is  diagonally dominant and
 positive, then $M\in\inn{{ \cs n }} $.}
\end{thm}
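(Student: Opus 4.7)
I would prove this via the dual characterization of the interior: $M\in\inn{\cs n}$ if and only if $\langle M,A\rangle>0$ for every $A\in\cc n\setminus\{0\}$. So I would assume for contradiction that some nonzero $A\in\cc n$ is orthogonal to $M$, and try to force $A=0$ using positivity and diagonal dominance of $M$ (with $n\ge 3$ entering at exactly one point).

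The key tool would be Kaykobad's theorem, or rather the explicit cp decomposition underlying its proof: any symmetric, nonnegative, diagonally dominant $M$ admits
$$M=\sum_{i=1}^n \alpha_i\,\e_i\e_i\T+\sum_{1\le i<j\le n} M_{ij}\,(\e_i+\e_j)(\e_i+\e_j)\T,$$
where $\alpha_i:=M_{ii}-\sum_{j\ne i}M_{ij}\ge 0$. Plugging this into $\langle M,A\rangle=0$, every term $\alpha_i A_{ii}$ and $M_{ij}(\e_i+\e_j)\T A(\e_i+\e_j)$ is nonnegative by copositivity of $A$, so each must vanish. Since $M$ is \emph{positive}, $M_{ij}>0$ for all $i\ne j$, which yields the crucial relations
$$A_{ii}+2A_{ij}+A_{jj}=0\qquad\text{for all }i\ne j.$$
Setting $d_i:=A_{ii}$ (so $d_i\ge 0$ by copositivity), this rewrites as $A_{ij}=-(d_i+d_j)/2$ for $i\ne j$.

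The hypothesis $n\ge 3$ would enter here: for any three distinct indices $i,j,k$, the vector $\x=\e_i+\e_j+\e_k$ is nonnegative, and a short calculation using the relations above gives $\x\T A\x=-(d_i+d_j+d_k)$. Copositivity of $A$ then forces $d_i+d_j+d_k\le 0$, and combined with $d_i,d_j,d_k\ge 0$ this yields $d_i=d_j=d_k=0$. Since $n\ge 3$, every index lies in such a triple, hence all $d_i=0$, whence $A_{ij}=0$ for every pair, contradicting $A\ne 0$.

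I do not expect a serious obstacle: once one spots Kaykobad's decomposition as the right cp factorization to test, the whole argument is mechanical. The only subtle point — and the place where the hypothesis $n\ge 3$ is indispensable — is the three-index copositivity test, and one should verify that the statement genuinely fails at $n=2$ (take $M=\e\e\T$ and $A=\begin{pmatrix}1&-1\\-1&1\end{pmatrix}\in\cc 2$) to confirm that the proof must exploit a triple of indices and cannot be carried out pair-by-pair.
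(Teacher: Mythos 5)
Your proof is correct, but it takes a genuinely different route from the paper's. The paper argues on the primal side: writing $\mu=\min_{i,j}M_{ij}>0$, it sets $M'=M-\mu J_n$, observes that $M'\in\NN_n$ is \emph{strictly} diagonally dominant precisely because $n\ge 3$ (the diagonal surplus grows by $(n-2)\mu>0$), hence completely positive and nonsingular by Kaykobad, and then exhibits $M=VV\T$ with first column $\sqrt{\mu}\,\e>0$ and $\rank V=n$, which is Dickinson's characterization of $\inn{\cs n}$. You instead argue on the dual side: starting from the Kaykobad cp decomposition of $M$ itself and the characterization $M\in\inn{\cs n}\iff\langle M,A\rangle>0$ for all $A\in\cc n\setminus\{0\}$, you show that any copositive $A\perp M$ must satisfy $A_{ii}+2A_{ij}+A_{jj}=0$ for all $i\ne j$ (using only that the off-diagonal entries of $M$ are positive), and then the test vector $\e_i+\e_j+\e_k$ kills $A$. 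Your computation $\x\T A\x=-(d_i+d_j+d_k)$ checks out, and the dual-cone characterization of the interior is standard for the proper cone pair $(\cs n,\cc n)$, so there is no gap. What each approach buys: the paper's proof is shorter because it outsources the work to Dickinson's and Kaykobad's theorems as black boxes; yours is more self-contained (the Kaykobad decomposition is verified by inspection) and makes completely transparent where $n\ge 3$ enters — the three-index copositivity test — with the $J_2$ example confirming sharpness, which nicely parallels the paper's closing remark that $J_2\in\bd\cs 2$. It is also worth noting that both proofs exploit $n\ge 3$ through essentially the same slack, once via $(n-2)\mu>0$ and once via the triple of indices.
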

\bep Let $\e= [1,\ldots , 1]\T\in \R^n$, let $J_n= \e\e\T$ denote the all ones $n\times n$ matrix, and let  $\mu :=\min_{i,j}M_{ij} >0$. Consider $M':=M-\mu J_n$. Since $n\ge 3$, $M'\in \NN_n$  is  strictly  diagonally dominant and therefore completely positive and nonsingular.  So we can put $M=VV\T$ where $\v_1 =\sqrt{\mu}\e$, and the remaining $\v_i$  come from the cp  factorization of $M'$. By Dickinson's characterization~\cite{Dick10} of $\inn{{  \cs n  }} $, the assertion is proved.\ep

Note that for $n=2$ there exist positive diagonally dominant matrices that are not in $\inn{{  \cs 2  }} $, e.g., $J_2$, which is singular, and
therefore on $\bd \cs 2$.

\section{Extreme copositive matrices of low rank}\label{extlow}

\ignore{As already noted,} If $A\in (\ext \cc n )\cap \NN_n$, then there is at most one
positive entry on or above the diagonal. If this entry is on the diagonal, we have $\rank A = 1$ and $A\in \PP_n$. If the 
positive entry is off the diagonal, then $A$ is in the orbit of the matrix $E_{12} = \e_1\e_2\T + \e_2\e_1\T$, \cor{and hence is of rank two. Next we will sharpen these assertions, basically dropping the nonnegativity assumption on $A$.} \cog{We will need the following  auxiliary result, on the role of zero entries on the diagonal of an extreme copositive matrix:}

\begin{lemma}\label{zeroext}
Suppose that $A\in (\ext\cc n)\setminus \NN_n$ can be decomposed as
$$A= \left[\bea{ccl} S &R \\ R\T &Q\ea\right ]\, , \quad \mbox{with }S\in \csn k\mbox{ and }\diag Q = \o\in \R^{n-k}\,
\cobf{, k\ge 1}.$$
\cobf{Then $R$ and $Q$ are zero matrices (of suitable orders) and $0\neq S\in \ext (\cc k)$.}
\end{lemma}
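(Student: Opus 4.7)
The plan is to split $A = A_1 + A_2$ with
\[
A_1 = S \oplus 0_{n-k} , \qquad A_2 = \left[\bea{cc} 0 & R \\ R\T & Q \ea\right] ,
\]
to show that both summands belong to $\cc n$, and then to exploit that $A$ generates an extreme ray.

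First, I would establish $A_1 \in \cc n$ by showing $S \in \cc k$. This follows from evaluating $\qf \x A \ge 0$ on vectors of the form $\x = [\u\T, \o\T]\T$ with $\u \in \R^k_+$. For $A_2$, the crucial observation is that it is in fact entrywise nonnegative: testing copositivity of $A$ on $\x = s\e_{k+i} + t\e_{k+j}$ with $s,t>0$ collapses to $2st\, Q_{ij} \ge 0$ because $Q_{ii}=Q_{jj}=0$, whence $Q \ge 0$; and testing on $\x = \e_j + t\e_{k+i}$ with $j\le k$ and letting $t \to \infty$ forces $R_{ji} \ge 0$. Hence $A_2 \in \NN_n \subseteq \cc n$.

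Extremality of $A$ now demands $A_1 = \lambda A$ for some $\lambda \ge 0$. Comparing the three block positions of this identity yields $S = \lambda S$, $\lambda R = 0$, and $\lambda Q = 0$. The only possibilities are $\lambda = 0$, which forces $S = 0$ and hence $A = A_2 \in \NN_n$ (contradicting the hypothesis $A \notin \NN_n$), and $\lambda = 1$, which delivers $R = 0$, $Q = 0$, and $S \neq 0$ (since $A \neq 0$). This establishes $A = S \oplus 0_{n-k}$ with $S \neq 0$.

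Finally, extremality of $S$ in $\cc k$ follows via the same block-diagonal correspondence: any decomposition $S = S_1 + S_2$ with $S_i \in \cc k$ lifts to $A = (S_1 \oplus 0) + (S_2 \oplus 0)$ with both summands in $\cc n$, so extremality of $A$ makes each $S_i \oplus 0$ a nonnegative multiple of $S \oplus 0$, whence each $S_i$ is a nonnegative multiple of $S$. The main subtlety I expect is the blockwise case analysis in the third step: once $R \ge 0$ and $Q \ge 0$ are in hand, the hypothesis $A \notin \NN_n$ is precisely the tool needed to exclude $\lambda = 0$ and thus to rule out $S = 0$; everything else is bookkeeping.
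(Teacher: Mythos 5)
Your proposal is correct and follows essentially the same route as the paper: the same splitting $A=(S\oplus 0)+\left[\begin{smallmatrix}0&R\\ R\T&Q\end{smallmatrix}\right]$, the same observation that the zero diagonal of $Q$ forces $Q\ge 0$ and $R\ge 0$ (hence the second summand is copositive), and the same appeal to extremality to kill $R$ and $Q$ and to pass extremality down to $S$. The paper merely gets $S\neq 0$ slightly more directly (since $R,Q\ge 0$ and $A\notin\NN_n$, the block $S$ must contain a negative entry), whereas you extract it from the $\lambda$ case analysis; both are fine.
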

\bep
Since $\diag Q=\o$ and \cogg{$A\in \cc n$}, 
we deduce $Q\in \R_+^{(n-k)\times (n-k)}$ and $R\in \R_+^{k\times (n-k)}$.
Further,  \cogg{since $A\notin \NN_n$,} \cop{$S\in \R^{k\times k}$} has  at least one \cogg{negative}
element. Thus $k\ge 1$ and $S\neq 0$.
    We conclude that
    $$A= \left[\bea{cc} S &0 \\ 0 &0\ea\right ] + \left[\bea{cc} 0 &R \\ R\T &Q\ea\right ]\, ,$$
   where the rightmost matrix has no negative entries and therefore is copositive.  As $S\neq 0$, extremality of $A$ implies  that both $Q$ and $R$ have to be zero matrices, and $A=  S\oplus 0$ as well as extremality of $S$ in $\cc k$ follows.
\ep

We can now prove:

\begin{thm}\label{locoprank}
Let $A\in \ext\cc n$. Then
\begin{itemize}
\item[{\rm(a)}] $\rank A=1$ if and only if $A$ is positive-semidefinite.
\item[{\rm(b)}] $\rank A=2$ if and only if $A$ is
{in the orbit of $E_{12}=\e_1\e_2\T+\e_2 \e_1\T$.}
\end{itemize}
\end{thm}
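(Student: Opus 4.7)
Part (a) is straightforward in both directions. If $\rank A=1$, then $A=\sigma\a\a\T$ with $\sigma\in\{\pm1\}$, and $\sigma=-1$ is incompatible with copositivity (the diagonal would be non-positive), so $A=\a\a\T\in\PP_n$. Conversely, since $\PP_n\subseteq\cc n$, any PSD decomposition of $A$ is also a copositive decomposition, so $\ext\cc n\cap\PP_n\subseteq\ext\PP_n$, and the extreme rays of $\PP_n$ have rank one.

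For part (b), the reverse direction is a direct verification. The matrix $A=c(\e_i\e_j\T+\e_j\e_i\T)$ with $c>0$ and $i\ne j$ has rank two and is copositive since $\z\T A\z=2cz_iz_j\ge 0$ on $\R^n_+$. In any decomposition $A=B+C$ with $B,C\in\cc n$ the zero diagonal of $A$ forces $B_{\ell\ell}=C_{\ell\ell}=0$ for every $\ell$, and the standard fact that a copositive matrix is entrywise nonnegative on any row whose diagonal entry vanishes then confines $B$ and $C$ to the $\{i,j\}\times\{i,j\}$ block, so both are nonnegative multiples of $A$.

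The forward direction of (b) is the main work and proceeds by contradiction. If $A\in\NN_n$, the preamble observation places $A$ in the orbit of $E_{12}$, so assume $A\notin\NN_n$. A vanishing diagonal combined with copositivity would put $A$ back into $\NN_n$, so some $A_{ii}>0$; permuting so that the zero diagonal entries come last and applying Lemma~\ref{zeroext} yields $A=S\oplus 0$ with $S\in\ext\cc k$, $\rank S=2$, $\diag S>0$, and $S\notin\NN_k$. Conjugating by a positive diagonal matrix normalizes $\diag S=\e$ while preserving the orbit. Inertia $(2,0)$ for $S$ would force rank one by part~(a); inertia $(0,2)$ would give $\z\T S\z\le 0$ on $\R^k_+$ and hence $S=0$; so $S$ has inertia $(1,1)$ and admits a factorization $S=\tfrac12(\u\v\T+\v\u\T)$ with $\u,\v\in\R^k$ linearly independent.

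The remaining task is the sign-pattern analysis. Copositivity now reads $(\z\T\u)(\z\T\v)\ge 0$ on $\R^k_+$. Testing $\z=\e_i$ yields $u_iv_i\ge 0$, and testing $\z=\e_i+\eps\e_j$ with $\eps\downarrow 0$ gives $u_jv_i\ge 0$ for all $j$ whenever $u_i=0$; if such a $v_i$ were nonzero, $\u$ would be of a single sign, and a short check would then yield $S\in\NN_k$, a contradiction. Hence $\supp\u=\supp\v$, which by $\diag S=\e$ equals $\{1,\ldots,k\}$; since $S\notin\NN_k$, the common sign pattern is strictly mixed, so we partition $\{1,\ldots,k\}=I^+\sqcup I^-$ accordingly. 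Using $u_iv_i=1$, AM--GM gives $S_{ij}\le -1$ for every cross pair $i\in I^+,\,j\in I^-$, while copositivity on $\e_i+\e_j$ gives $S_{ij}\ge -1$; the forced equality in AM--GM makes $|u_i|=|u_j|$, so all $|u_\ell|$ coincide and $\v$ is a scalar multiple of $\u$, contradicting $\rank S=2$. The principal obstacle is this sign-pattern step; once it is in hand, the AM--GM squeeze is essentially mechanical.
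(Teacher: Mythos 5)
Your proof is correct and follows essentially the same route as the paper's: reduce via Lemma~\ref{zeroext} to a principal block with positive diagonal, write the indefinite rank-two extreme matrix as a symmetrized outer product, normalize the diagonal, and use copositivity of the $2\times 2$ blocks on mixed-sign index pairs to force equality (your AM--GM squeeze is the paper's $(t+1)^2\le 0$ computation) and hence proportionality of the two factors, contradicting rank two. The only differences are cosmetic: you spell out the ``if'' directions that the paper calls obvious, and your $\eps$-perturbation argument for $\supp\u=\supp\v$ is redundant since $\diag S=\e$ already gives $u_iv_i=1$ for all $i$.
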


\begin{proof}
The if parts are obvious. For the only if:
\begin{itemize}
\item[{\rm(a)}] If $\rank A=1$ then, since $A$ is symmetric, $A=\pm\x\x\T$ for some $\x\in \R ^n$. Since the diagonal entries of $A$ are nonnegative, $A=\x\x\T$.
\item[{\rm(b)}] {Suppose $\rank A=2$; if $A\in\NN_n$, then the result follows directly. So suppose $A$ has a negative entry. By extremality   $A\notin \PP_n$. Hence $A$ must be indefinite, i.e.~of the form $A= \u\u\T - \v\v\T =\x\y\T+\y\x\T$ (take, e.g., $\x= \frac 12 (\u+\v)$ and $\y= \u - \v$).
    For any $\z\in \R^n_+$ we have
    $$0\le \qf \z A = 2(\x\T\z)(\y\T \z)\, ,$$
    in particular \cob{$x_iy_i  \ge 0$} for all $i$.
        Put $\Omega:=\lk i: x_iy_i \neq 0\rk$. Then $x_i y_i >0$ for all $i\in \Omega$. {By permuting rows and columns if necessary, we may assume that $\Omega=\{1, \ldots, k\}$,} and $A$ can be decomposed
        as in Lemma~\ref{zeroext}, yielding $A= S\oplus 0$. So
    we may without loss of generality assume that $x_iy_i>0$ holds for all $i$, by investigating $S$ instead of $A$. By diagonal scaling we may now further assume that $x_iy_i=1$ for all $i$. Now,
    if either $\lk \x,\y\rk \subset \R^n_+$ or $\lk-\x,-\y\rk \subset \R^n_+$,
    we again would arrive at $A\in {\mathcal N}_n$. So we are done if we reduce the assumption $x_i>0>x_j$ {\em ad absurdum.} To this end, consider the $2\times 2$ block corresponding to these two indices $\lk i,j\rk$, putting $t=\frac {x_i}{x_j}<0$:
    $$\left[\bea{cc} 2x_iy_i &x_iy_j+ y_ix_j  \\ x_iy_j+ y_ix_j &2x_jy_j\ea\right ] = \left[\bea{cc} 2 &t+ \frac 1t  \\ t+ \frac 1t  &2\ea\right ]\in \cc 2\, .$$
    Copositivity of this $2\times 2$ matrix is equivalent to the condition $t+ \frac 1t \ge - 2$, which upon multiplication with $t<0$ amounts to $(t+1)^2 \le 0$ or $t=-1$. So all positive entries of $\x$ are equal, say $\alpha$, and all negative entries of $\x$ equal $-\alpha$. Hence $\y$ is a multiple of $\x$ and $\rank A = 1<2$, a contradiction.}
 \end{itemize}
  \end{proof}

\cogg{\begin{remark}\label{rnkextco}
Theorem \ref{locoprank} implies that for $n\le 4$ each matrix in $(\ext \cc n)\setminus\lk 0\rk $ has  rank 1 or 2.
The characterization of the extreme copositive $5\times 5$ matrices implies that each matrix in $(\ext \cc 5)\setminus\lk 0\rk$ has rank 1,2, or 5 \cite{Hild11}. What are the possible ranks of  matrices in $\ext \cc n$,  $n\ge 6$? Note that if there exists  $A\in \ext \cc n$ of rank $k$, then there exist also matrices in $\ext \cc {n+1}$  of rank $k$ ($A\oplus 0$, to name one). \end{remark}}

\cob{We proceed with} an immediate consequence for   positive and nonsingular matrices $M\in \bd \cs n$:

\begin{corollary}\label{rkthree}
{Suppose $M\in (\bd \cs n) \setminus \left (\bd {\PP}_n \cup \bd {\NN}_n\right )$ and
$0\neq A\in  \ext \cc n$. Then $A\perp M$ implies $\rank A\ge 3$.}
\cob{Moreover, no principal submatrix $S$ of such $A$ can be in the orbit of $E_{12}$.}
\end{corollary}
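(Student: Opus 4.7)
The plan is to first unpack the hypothesis: since $\cs n \subseteq \PP_n \cap \NN_n$, the conditions $M\in\bd \cs n$, $M\notin \bd \PP_n$, and $M\notin \bd \NN_n$ force $M\in\inn{\PP_n} \cap \inn{\NN_n}$; in other words $M$ is simultaneously positive definite \emph{and} entrywise positive. Both structural features of $M$ will be used.

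To prove $\rank A \ge 3$, I would go case by case using Theorem~\ref{locoprank}. If $\rank A = 1$, part~(a) gives $A = \x\x\T$ with $\x \ne \o$, so $\langle A,M\rangle = \x\T M \x > 0$ by positive-definiteness of $M$, contradicting $A\perp M$. If $\rank A = 2$, part~(b) places $A$ in the orbit of $E_{12}$, so $A\in\NN_n$ and (after a permutation and positive diagonal scaling) carries exactly one symmetric pair of positive off-diagonal entries $A_{ij}=A_{ji}>0$; then $\langle A,M\rangle = 2 A_{ij} M_{ij} > 0$ because $M$ is entrywise positive, another contradiction. Hence $\rank A \ge 3$.

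For the ``moreover'' clause I would first rule out $A \in \NN_n$: as recorded just before Lemma~\ref{zeroext}, an extremal copositive matrix lying in $\NN_n$ must be a positive multiple of some $\e_i\e_i\T$ or of some $E_{ij}$, and both alternatives have just been excluded (by positive-definiteness, resp.\ entrywise positivity, of $M$). Now suppose some principal submatrix of $A$, of order $m$, lies in the orbit of $E_{12}$. Necessarily $2 \le m \le n-1$ (the case $m = n$ was already ruled out by $\rank A \ge 3$) and this submatrix has zero diagonal. Permuting rows and columns of $A$ so that the indices of this submatrix occupy the last $m$ positions casts $A$ in the form required by Lemma~\ref{zeroext} with $k = n - m \ge 1$ and bottom-right block of zero diagonal. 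Applying Lemma~\ref{zeroext} to $A \in (\ext \cc n)\setminus \NN_n$ forces that bottom-right block to be the zero matrix, contradicting the fact that any matrix in the orbit of $E_{12}$ has nonzero entries. The only step worth flagging is the recognition that the principal-submatrix condition can be converted, by a permutation, into exactly the block pattern to which Lemma~\ref{zeroext} applies; once this reduction is in hand, no genuine calculation remains.
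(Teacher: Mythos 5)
Your argument is correct and follows essentially the same route as the paper: Theorem~\ref{locoprank} rules out ranks $1$ and $2$ via orthogonality to the positive definite, entrywise positive $M$, and the extremality splitting behind Lemma~\ref{zeroext} eliminates any principal block in the orbit of $E_{12}$. The only cosmetic difference is that you invoke Lemma~\ref{zeroext} directly (placing the $E_{12}$-orbit block as the zero-diagonal corner $Q$, so the lemma's conclusion $Q=0$ gives the contradiction), whereas the paper re-runs the same splitting with that block in the opposite corner and concludes $\rank A = 2$, a contradiction.
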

\bep
\cob{Because $M$ is assumed to be   positive, we know $A\notin \NN_n$; similarly, since $M$ is nonsingular, we conclude
$A\notin \PP_n$. Therefore
Theorem~\ref{locoprank} implies $\rank A \ge 3$. Next suppose a principal submatrix $S\neq 0$ of $A$ is in the orbit of $E_{12}$ and thus has $\diag S = \o$.} Then $A$ can be decomposed into
$$A= \left[\bea{cc} S &0 \\ 0 &0\ea\right ] + \left[\bea{cc} 0 &R \\ R\T &Q\ea\right ]\, ,$$
where $R$ has no negative entries and $Q$ is copositive. Hence the rightmost matrix is copositive, and therefore (by extremality of $A$
and $S\neq 0$)  must be the zero matrix. It follows $\rank A = \cor{\rank S\oplus 0 = \rank S = 2}$, but then Theorem~\ref{locoprank}(b) yields the contradiction $\cog{A}\in \bd {\mathcal N}_n$.
\ep

 \cor{\section{Matrices of fixed cp rank and order}}\label{genercpr}
We now turn to the study of the cp-rank of matrices in $\cs n$. 
In this section we consider the location of matrices with a certain fixed cp-rank in the cone $\cs n$,
and whether they constitute a substantial part of this cone.

First we observe that every possible cp-rank is attained at some matrix on the boundary:

\begin{proposition}\label{bd-cpr}
\cog{For every $1\le k\le p_n$ there exists a matrix $M_k \in \bd {\cs n}$ such that $\cpr M_k=k$.}
\end{proposition}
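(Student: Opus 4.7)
The plan is to construct the matrices $M_k$ by truncating a minimal cp decomposition of the extreme cp-rank matrix provided by~\cite[Thm.\,3.4]{Shak13}. That theorem supplies some $M^* \in \bd \cs n$ with $\cpr M^* = p_n$. Since $\cc n$ is the dual cone of $\cs n$, the condition $M^* \in \bd \cs n$ is equivalent to the existence of a nonzero $A \in \cc n$ with $\langle M^*, A\rangle = 0$; fix such an $A$.

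Now fix a minimal cp decomposition $M^* = \sum_{j=1}^{p_n} \x_j \x_j\T$. Expanding gives $0 = \langle M^*, A\rangle = \sum_{j=1}^{p_n} \x_j\T A \x_j$, and because each summand is nonnegative ($A$ is copositive and $\x_j \in \R^n_+$), we obtain $\x_j\T A \x_j = 0$ for every $j$. For each $1 \le k \le p_n$ I would then set
$$M_k := \sum_{j=1}^k \x_j \x_j\T \in \cs n\,.$$
By construction $\langle M_k, A\rangle = \sum_{j=1}^k \x_j\T A \x_j = 0$, so by the same duality characterization $M_k \notin \inn{\cs n}$; since $M_k \in \cs n$, this places $M_k$ on $\bd \cs n$.

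It remains to verify $\cpr M_k = k$. The displayed decomposition immediately gives $\cpr M_k \le k$. For the reverse inequality, if $M_k$ admitted a cp decomposition with at most $k-1$ terms, concatenating it with $\sum_{j=k+1}^{p_n} \x_j \x_j\T$ would yield a cp decomposition of $M^*$ using at most $p_n - 1$ terms, contradicting the minimality of the chosen decomposition of $M^*$. Hence $\cpr M_k = k$. The whole argument rests on the combination of the nonsingular boundary maximizer from~\cite{Shak13} with standard cone duality between $\cs n$ and $\cc n$; the only subtle step is the minimality argument that anchors $\cpr M_k$ from below, and even this is essentially immediate once the decomposition is fixed.
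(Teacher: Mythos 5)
Your proof is correct and follows essentially the same route as the paper: truncate a minimal cp decomposition of a boundary matrix attaining $p_n$, obtain $\cpr M_k = k$ from minimality, and place $M_k$ on the boundary via orthogonality to a nonzero copositive $A$. The only cosmetic difference is that you spell out the duality characterization of the boundary and the vanishing of each $\x_j\T A \x_j$ explicitly, which the paper leaves implicit.
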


\bep
\cog{By \cite[Thm.\,3.4]{Shak13} there exists a matrix $M\in \bd  \cs n $ with $\cpr M=p_n$. Let
$M=\sum_{j=1}^{p_n} \v_j \v_j\T$ be a minimal cp-\cog{decomposition} of $M$, and let $M_k=\sum_{j=1}^{k} \v_j \v_j\T$.
Then $\cpr M_k\le k$, and strict inequality is impossible, because it would contradict the minimality
of the cp-\cog{decomposition} of $M$. That is, $\cpr M_k=k$. \cogg{Since $M$ is  on the boundary of  $\cs n$, there exists $A\in (\bd \cc n)$ with $A\neq 0$ such that $M$ is orthogonal to $A$. Then $\v_j\v_j\T \corr{\perp} A$ for every $j$, and thus $M_k\perp A$, and therefore \corr{$M_k\in \bd \cs n$.}}}
\ep

However, it is interesting to find out whether there are also interior matrices having a prescribed cp-rank, and whether they form a significant portion of the interior.  \cogg{For this purpose, we denote the set of completely positive matrices of order $n$ with cp-rank exactly equal to $k$ by
$$\cs {n,k} = \lk M \in \cs n : \cpr M = k\rk\, .$$}
\ignore{the ``percentage'' of matrices with a given cp-rank in $\cs n$.
To pose this question more precisely let the conic measure $\mu_c$ of a
measurable cone $\ckk$ in a Euclidean space be defined as
$$
\mu_c(\ckk):=\frac{\mu(\ckk\cap \BB)}{\mu( \BB)}
$$
where $\mu$ is the standard Lebesgue measure and $\BB$ is the unit ball centered at the origin.
\cor{Denote the set of completely positive matrices with cp-rank exactly equal to $k$ by
$$\cs {n,k} = \lk M \in \cs n : \cpr M = k\rk\, .$$
We know~\cite[Prop.2.4]{Shak13} that \cog{$\ca _{n,k} = \lk M\in \cs n : \cpr M \le k\rk$} is closed for all $k\in \lk 1,\ldots, p_n\rk$, so that
$\cs {n,k} = \ca _{n,k}\setminus \ca _{n,k-1}$ is a Borel set.}  \cog{The question mentioned above is:
\[\text{Compute, or estimate, $\mu_c(\ckk)$ for every $1\le k\le p_n$.} \]  We will now prove that}
$$
\mu_c(\cs {n,k}) > 0\quad  \Longleftrightarrow\quad  n\le k\le p_n\, .
$$}
\cor{The extreme case $k=p_n$ is easy: as shown in \cite[Cor.\,2.5]{Shak13}, the set $\cs {n,p_n}$ contains an open set, and thus,
$\inn { \cs {n,p_n} } \neq \emptyset $.
To prove this for all other $k$, we need a result which may also be of independent interest.} \corr{Beforehand}
\cog{note that every  $M\in \inn { \cs {n,p_n} }$ has a factorization $M =VV\T$ where $V\ge 0$ has $p_n$ columns, and  by one of Dickinson's characterizations of $\inn { \cs n }$  \cite[Thm.\,3.8]{Dick10}, there exists a factorization $M =WW\T$ where $W$ is 
positive (and has rank $n$). However, this does not necessarily imply that there is a  factorization $M =VV\T$ where $V\ge 0$ has $p_n$ columns \emph{and} all of these columns are
positive.}

\begin{proposition}\label{pos-cpr}
For every $n$ there exists a matrix $M\in \inn { \cs {n,p_n} }$ which has a minimal cp-factorization $M=VV^T$ with positive $V\in \R^{n\times p_n}$.
\ignore{There is always a matrix $M\in \inn { \cs {n,p_n} }$ such that there exists a   positive $n\times p_n$ matrix $V$ with $M =VV\T$.}
\end{proposition}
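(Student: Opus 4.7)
The plan is to start from any matrix $\widetilde M \in \inn{\cs{n,p_n}}$ — which exists by \cite[Cor.\,2.5]{Shak13}, as already cited in the paragraph immediately preceding the proposition — and to perturb one of its minimal cp-factors entrywise by a small positive matrix. Continuity of the Gram map $V\mapsto VV\T$, together with the openness of $\inn{\cs{n,p_n}}$, will ensure that the perturbed matrix stays inside this interior set, so that its cp-rank remains equal to $p_n$ and the perturbed, strictly positive, factor becomes an automatic witness of minimality.

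Concretely, I would fix a minimal cp-factorization $\widetilde M = V_0 V_0\T$ with $V_0 \in \R^{n \times p_n}_+$, let $E \in \R^{n\times p_n}$ denote the all-ones matrix, and for $\eps > 0$ set
\[
V_\eps := V_0 + \eps E, \qquad M_\eps := V_\eps V_\eps\T .
\]
Then $V_\eps$ is entrywise strictly positive for every $\eps > 0$, and $M_\eps \to \widetilde M$ in $\csn n$ as $\eps \to 0^+$. Since $\inn{\cs{n,p_n}}$ is by definition open and contains $\widetilde M$, there is some $\eps_0 > 0$ with $M_\eps \in \inn{\cs{n,p_n}}$ for all $0 < \eps < \eps_0$. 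For any such $\eps$ one therefore has $\cpr M_\eps = p_n$, so the $p_n$-column factorization $M_\eps = V_\eps V_\eps\T$ is automatically minimal, and setting $M := M_\eps$, $V := V_\eps$ yields the pair claimed by the proposition.

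I do not expect a real obstacle: the only nontrivial input is the nonemptiness of $\inn{\cs{n,p_n}}$, which is precisely the content of \cite[Cor.\,2.5]{Shak13}. It is worth pointing out why the more direct approach — combining Dickinson's characterization of $\inn{\cs n}$ with a minimal cp-factorization of an interior point — does \emph{not} work out of the box: Dickinson's theorem supplies a positive factor with $n$ columns, whereas a minimal cp-factorization in general needs $p_n \ge n$ columns, and there is no a priori reason for positivity and minimality to be realized by the \emph{same} factor. The perturbation above sidesteps this mismatch by perturbing a known minimal factor rather than trying to modify a known positive one.
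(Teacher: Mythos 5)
Your proof is correct, and it takes a genuinely different (and somewhat more elementary) route than the paper. The paper also starts from an $M_0\in\inn{\cs{n,p_n}}$ and also invokes openness of that set, but it perturbs the \emph{matrix} rather than the factor: it adds $\eps\,\v\v\T$ where $\v>\o$ is the Perron--Frobenius eigenvector of $M_0$, writes $\v=V_0\x$ with $\x>\o$ (using that no column of $V_0$ vanishes), and then produces the positive factor as $V=V_0C$ where $C=I_n+\delta\x\x\T$ is the explicit positive square root of $I_n+\eps\x\x\T$, so that $VV\T=V_0(I_n+\eps\x\x\T)V_0\T=M_0+\eps\v\v\T$. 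You instead perturb the factor directly, $V_\eps=V_0+\eps E$, and let continuity of $V\mapsto VV\T$ push $M_\eps$ into the open set $\inn{\cs{n,p_n}}$; positivity of the factor and minimality (exactly $p_n$ columns on a matrix of cp-rank $p_n$) are then immediate. Both arguments rest on exactly the same nontrivial input, namely $\inn{\cs{n,p_n}}\neq\emptyset$ from \cite[Cor.\,2.5]{Shak13}, and both suffice for the only later use of the proposition (in Theorem~\ref{gener-cpr}). What the paper's construction buys is that the perturbed factor has the structured form $V_0C$ with $C$ positive definite, i.e.\ the perturbation is a congruence of the Gram factorization, and the perturbation of $M_0$ itself is a completely positive rank-one update; your construction trades that structure for brevity, since $M_\eps-\widetilde M$ need not be (and is not required to be) completely positive. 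Your closing remark about why Dickinson's characterization alone does not settle the issue matches the caveat the paper itself states just before the proposition.
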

\bep
Let $M_0$ be some matrix in the interior of $\cs {n,p_n}$.
As in \cite{Shak13}, let $\v>\o$ be its Perron-Frobenius eigenvector to the eigenvalue $\lambda >0$.
If $M_0=V_0V_0\T$ where $V_0$ is a nonnegative $n\times p_n$-matrix, then no column
\cor{$V_0\e_i$ of $V_0$ is zero. Therefore $(V_0\T\v)_i = \v\T V_0\e_i >0$ for all $i$, in other words, \cop{$\tilde\x:=V_0\T\v >\o$}.}
From $\lambda\v=M_0\v=V_0V_0\T\v=V_0\tilde\x$ it follows that $\x:=\tilde\x/\lambda$ is a   positive vector with $\v=V_0\x$.
The choice of $M_0$ implies that for small $\eps >0$ the matrix $M=M_0+\eps \v\v\T$  also has $\cpr M =p_n$.
Now
\beq{MVrel}
M=M_0+\eps \v\v\T=V_0V_0\T+(V_0\x)\eps(V_0\x)\T=V_0(I_n+\eps \x\x\T)V_0\T\, .
\eeq
\cor{For $\delta =( \sqrt{1+\eps \x\T\x}-1)/\norm\x ^2>0$, define $C=I_n + \delta \x\x\T$. Then $C^2 = I_n + \eps\x\x\T$ and \cop{$V=V_0C = V_0 + \delta(V_0\x)\x\T \in \R^{n\times p_n}_+$} is   positive (since $\x >\o$ and \cop{$V_0\x  >\o$}). By~\reff{MVrel} we obtain $ VV\T = V_0 C^2 V_0\T = M$.} \ep

\begin{thm}\label{gener-cpr}
$$
\inn { \cs {n,k} }\neq \emptyset \quad  \Longleftrightarrow\quad  n\le k\le p_n\, .
$$
\end{thm}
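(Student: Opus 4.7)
The necessity direction is immediate: $k\le p_n$ is just the definition of $p_n$, while if $k<n$ then every $M\in \cs{n,k}$ satisfies $\rank M\le \cpr M = k <n$, so $M$ is singular, and the singular symmetric matrices form a proper algebraic, hence nowhere dense, subset of $\csn n$.  All the work is therefore in the sufficiency direction, and my plan is to exhibit an explicit witness $M_k\in\inn{\cs{n,k}}$ for every $k\in \lk n,\ldots,p_n\rk$ by truncating a \emph{minimal positive} cp-factorization.

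Starting from Proposition~\ref{pos-cpr} I would fix $M_* \in \inn{\cs {n,p_n}}$ together with a minimal cp-factorization $M_* = V_* V_*\T$ in which $V_* = [\v_1,\ldots,\v_{p_n}]$ is positive. Since $M_*$ is nonsingular we have $\rank V_* = n$, and after permuting the columns we may assume $\v_1,\ldots,\v_n$ to be linearly independent. For each $k\in \lk n,\ldots, p_n\rk$ set
\[
V_k := [\v_1,\ldots,\v_k], \qquad M_k := V_k V_k\T = \sum_{i=1}^k \v_i \v_i\T,
\]
so that $V_k$ is positive and still has row rank~$n$. The key observation is that $\cpr M_k = k$: the upper bound is trivial, while for the lower bound, if $M_k = WW\T$ with $W\in \R^{n\times k'}_+$ and $k'<k$, then appending $\v_{k+1},\ldots,\v_{p_n}$ to $W$ would yield a cp-factorization of $M_*$ with $k' + (p_n - k) < p_n$ columns, contradicting minimality of $V_*$.

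It remains to upgrade this to interior membership. The factorization map $\phi:\R^{n\times k}\to \csn n$, $\phi(V):=VV\T$, has derivative $\delta V\mapsto \delta V\, V_k\T + V_k\, \delta V\T$ at $V_k$; because $V_k V_k\T$ is invertible (as $\rank V_k = n$), the choice $\delta V = \tfrac{1}{2} S (V_k V_k\T)^{-1} V_k$ realises any prescribed $S\in\csn n$, so this derivative is surjective. The open mapping theorem then furnishes a neighbourhood $U$ of $M_k$ in $\csn n$ contained in $\phi(\BB)$ for some ball $\BB$ around $V_k$; since $V_k$ is positive, $\BB$ may be taken inside the open set of positive $n\times k$ matrices, whence $U\subseteq \lk M\in \cs n : \cpr M \le k\rk$. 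Because the set $\lk M\in \cs n : \cpr M\le k-1\rk$ is closed and does not contain $M_k$, a further shrinking of $U$ places it inside $\cs{n,k}$, yielding $M_k\in\inn{\cs{n,k}}$. The main obstacle I anticipate is the ``minimal sub-factorization'' identity $\cpr M_k = k$: it crucially depends on truncating a \emph{minimal} factorization whose columns are all positive, which is precisely what makes Proposition~\ref{pos-cpr} indispensable here.
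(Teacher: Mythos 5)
Your argument is correct and follows the same overall strategy as the paper: both start from the positive minimal factorization supplied by Proposition~\ref{pos-cpr}, truncate it to the first $k$ columns to obtain $M_k$ with $\cpr M_k=k$ (by exactly the same minimality argument), and then show that every nearby matrix has cp-rank exactly $k$ by combining local openness of the factorization map with the fact that matrices near $M_k$ cannot have cp-rank below $k$ (you phrase this via closedness of $\lk M : \cpr M\le k-1\rk$, the paper via \cite[Cor.~2.5]{Shak13}; these are equivalent). Where you genuinely diverge is the openness step. The paper freezes the last $k-n$ columns, perturbs only the nonsingular square block $\overline V=[\v_1,\ldots,\v_n]$, and restricts the perturbation to the ${n+1\choose 2}$-dimensional slice $U_2\T\csn n U_1\T$ built from the SVD of $\overline V$, so that the derivative becomes an invertible Lyapunov operator and the plain inverse function theorem applies; it also invokes the D\"ur--Still criterion to place $M_k$ in $\inn{\cs n}$. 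You instead perturb all $k$ columns of $V_k$ and note that the derivative $\delta V\mapsto \delta V V_k\T+V_k\delta V\T$ of $\phi(V)=VV\T$ is surjective onto $\csn n$ — your explicit preimage $\delta V=\frac 12 S(V_kV_k\T)^{-1}V_k$ does check out — and then appeal to local openness of $C^1$ maps with surjective derivative. Your route is somewhat leaner (no SVD, no distinguished square subfactor, no separate interior criterion), at the cost of using the submersion/open-mapping form of the theorem rather than the bijective inverse function theorem; both deliver the same neighborhood on which $\cpr\widehat M\le k$, so the proof is complete.
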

\begin{proof}
For $k<n$ it follows from $\cpr M \ge \rank M $ that $\cs {n,k}$ is contained in the set of matrices with
rank at most $k$ and thus its interior is empty. \cor{We now show that $\inn { \cs {n,k} }\neq \emptyset$ if $n\le k\le p_n$.
To this end, Proposition~\ref{pos-cpr} ensures we can select a matrix $M=VV\T\in\inn { \cs {n,p_n} }$ with a   positive
$n\times p_n$ matrix $V= [\v_1, \ldots ,\v_{p_n}]$. As $M\in \inn { \cs n } \subset \inn  { \csnp  } $, we have $\rank V = n$ and} without loss of generality, let the first $n$ columns $\lk \v_1,\ldots , \v_n\rk$ of $V$ be linearly independent.
Now, let any $k$ with $n\le k\le p_n$ be given and consider the matrix
$$\overline M :=\sum_{j=1}^k \v_j\v_j\T\, .$$
\cor{Obviously $\cpr \overline M \le k$. On the other hand, $\cpr \overline M <k$ would contradict the \cog{minimality of the factorization} $M=VV\T$, so $\cpr \overline M = k$.} Let $\overline V := [\v_1,\ldots,\v_n]$ and   $\tilde V :=[\v_{n+1},\ldots , \v_k]$. Then $\overline V$ is   positive and nonsingular square, so by~\cite[Thm.\,2.3]{Duer08}, we have $\overline M = \overline V\,\overline V\T + \tilde V \tilde V\T \in\inn { \cs n }$.
Next consider the singular value decomposition of $\overline V=U_1\Sigma U_2$ with suitable \cor{orthonormal $n\times n$}
matrices $U_1$ and $U_2$  and a positive-definite diagonal  $n\times n$ matrix $\Sigma$.
Let $U_2\T \csn n U_1\T$ be the \cor{set of all matrices of order $n$ which result
from premultiplying  a symmetric matrix $Z$ by $U_2\T$ and postmultiplying it by $U_1\T$.}
Consider the map $\FF:  U_2\T \csn n U_1\T  \to\csn n$ defined by
$\FF(\Delta V):=(\overline V+\Delta V)(\overline V+\Delta V)\T$.
The derivative of $\FF$ at $\Delta V=0$ is given by the Lyapunov operator
$$\LL_{\overline V}:U_2\T \csn n U_1\T\to\csn n \quad\mbox{with}\quad \LL_{\overline V}(\Delta V)=(\Delta V)\overline V\T+\overline V(\Delta V)\T \, .$$
Given a symmetric right hand side $R$, solving $\LL_{\overline V}(U_2\T Z U_1\T)=R$ for a symmetric matrix $Z$
is equivalent to
\begin{eqnarray*}
U_2\T Z U_1\T U_1\Sigma U_2+ U_2\T\Sigma U_1\T  U_1 Z U_2&=& R\\
\Longleftrightarrow \quad Z\Sigma+\Sigma Z = U_2 R U_2\T.
\end{eqnarray*}
Evidently, this is uniquely solvable for a symmetric $Z$ so that by the inverse function theorem,
$\FF$ is invertible in an open neighborhood of $\Delta V = 0$, and the inverse function
satisfies $\overline V+\Delta V>0$ in this neighborhood, \cor{by continuity}.
Summarizing, for any (symmetric) matrix $\widehat  M$ in an open neighborhood of $\overline M$ there exists a   positive $n\times n$ perturbation matrix
$$\widehat  V = \overline V + \FF^{-1} (\widehat  M - \tilde V\tilde V\T)$$
of $\overline V$, such that $\widehat  M=\widehat  V\widehat  V\T+ \tilde V\tilde V\T = \widehat  V\widehat  V\T+  \sum_{j=n+1}^k \v_j\v_j\T\in  \cs n $,
\cor{which establishes $\cpr \widehat  M \le k$. But we know from~\cite[Cor~2.5]{Shak13} that all matrices $\widehat  M \in \cs n$ which are sufficiently close to $\overline M$ have  $\cpr \widehat  M \ge k$, so we conclude
 $\widehat  M\in \cs {n,k}$, hence $\overline M$ is an inner point of $\cs {n,k}$, and the results follow.}
\end{proof}

\section{New bounds for the cp-rank}\label{newcprbounds}
\cog{In this section we prove that the known upper bound $b_n$ on the cp-rank of $n\times n$ matrices can be reduced, for every $n\ge 6$. For $n=6$ we reduce the bound further in the next section.}
 First, we combine the idea of \cite{Shak09} with Theorem~\ref{orth} to show that $p_n$ is  strictly less than $b_n$ for every $n\ge 3$.

\begin{thm}\label{bn-k+1}
For $n\ge 2$, if $A\in \bd \cc n$ has $k\ge 2$   positive diagonal elements, and $M\in \cs n$ is orthogonal to $A$,
then $\cpr M\le b_n - k+1$.
\end{thm}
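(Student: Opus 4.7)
The plan is to bound $p:=\cpr M$ by analyzing any minimal cp-decomposition $M=\sum_{j=1}^p\v_j\v_j^\top$, setting $P_j:=\v_j\v_j^\top$. I will show that (i) every $P_j$ lies in a linear subspace $\mathcal{W}\subseteq\csn n$ of codimension at least $k$, and (ii) the matrices $P_1,\ldots,P_p$ are linearly independent in $\csn n$. Combining these yields $p\le\dim\mathcal{W}\le\binom{n+1}{2}-k=b_n-k+1$.

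For (i), the orthogonality $A\perp M$ gives $\tr(AM)=\sum_j\qf{\v_j}A=0$; since each summand is nonnegative by copositivity of $A$ and nonnegativity of $\v_j$, we deduce $\qf{\v_j}A=0$ for every $j$. The standard KKT consequence (already used inside the proof of Theorem~\ref{orth}) then yields $[A\v_j]_i=0$ for every $i\in\sigma(\v_j)$. For any index $i$ I can therefore write
\[
(AP_j)_{ii}=\sum_\ell A_{i\ell}v_{j\ell}v_{ji}=v_{ji}[A\v_j]_i=0,
\]
since either $i\in\sigma(\v_j)$ (making the bracket vanish) or $v_{ji}=0$. Thus every $P_j$ lies in $\mathcal{W}:=\{X\in\csn n: (AX)_{ii}=0\text{ for all }1\le i\le n\}$.

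Next I would verify $\dim\mathcal{W}\le\binom{n+1}{2}-k$ by exhibiting $k$ linearly independent linear functionals among the defining equations $\phi_i(X):=(AX)_{ii}$. Assume WLOG $A_{11},\ldots,A_{kk}>0$. If $\sum_{i=1}^k\alpha_i\phi_i\equiv 0$ on $\csn n$, then testing at $X=\e_j\e_j^\top$ for each $j\in\{1,\ldots,k\}$ gives $\alpha_j A_{jj}=0$, which forces $\alpha_j=0$ since $A_{jj}>0$. Thus $\phi_1,\ldots,\phi_k$ are linearly independent functionals on $\csn n$, and the codimension bound follows.

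For (ii), I would use the standard minimality trick: if $\sum_j\alpha_j P_j=0$ with some $\alpha_j\ne 0$, replace $\alpha$ by $-\alpha$ if necessary so that some $\alpha_j>0$, and note that the identity $M=\sum_j(1-t\alpha_j)P_j$ holds for every real $t$. Choosing $t=\min_{\alpha_j>0}1/\alpha_j$ keeps all coefficients nonnegative while annihilating at least one, producing a cp-decomposition of $M$ with fewer than $p$ summands and contradicting the minimality of $p$. Hence $P_1,\ldots,P_p$ are linearly independent in $\csn n$ and live in $\mathcal{W}$, giving $p\le\dim\mathcal{W}\le b_n-k+1$, as claimed. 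I expect no serious obstacle; the only delicate point is checking that each of the $k$ positive diagonal entries of $A$ really yields an independent linear constraint on the rank-one summands, which the test matrices $\e_j\e_j^\top$ resolve cleanly.
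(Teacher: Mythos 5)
Your proposal is correct and follows essentially the same route as the paper: both arguments confine the rank-one pieces of a cp-decomposition to the subspace $\lk B\in\csn n : \e_i\T BA\e_i=0,\ 1\le i\le k\rk$ (via $[A\v]_i=0$ for $i\in\supp{\v}$ whenever $\qf{\v}A=0$) and then count dimensions, using the $k$ positive diagonal entries of $A$ to certify that the $k$ defining functionals are independent. The only cosmetic difference is that the paper invokes Carath\'eodory's theorem for cones at the last step, whereas you reprove that step by hand through the linear independence of the summands of a minimal cp-decomposition.
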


\begin{proof}
We may assume that $A_{ii}>0$ for $i\in \lk 1, \ldots, k\rk $. Let
\[{\cal L }=\left\{ B\in \csn n  : \e _i\T B A\e _i=0 ~ ~\mbox{for all }i\in \lk 1, \ldots, k\rk \right\}\, .\]
Then $\{\v\v\T : \v\in \R^n_+ 
\text{ and } \v\T A\v=0 \}\subseteq \cal{L}$ by Theorem~\ref{orth}. The subspace $\cal L$ is isomorphic to the solution
space of the homogenous system of $k$ equations in variables $b_{ij}$, $1\le i\le j\le n$,
$$A_{ii}b_{ii}+\sum_{j<i}A_{ij} {\cog{b_{ji}}}+\sum_{i<j} A_{ij}{\cog{b_{ij}}}=0\, , \quad i\in \lk 1, \ldots, k\rk \, .$$
Since the diagonal matrix with
$A_{ii}$, $i=1, \ldots, k$, on the diagonal is a submatrix of the coefficients matrix, the rank of the coefficients matrix is $k$. Thus
$\dim {\cal L }={{n+1}\choose 2}-k$. Next suppose $M\in \cs n$ is orthogonal to $A\in \bd \cc n$. Then $M\in \conv\{\v\v\T : \v\in \R^n_+ 
\text{ and } \v\T A\v=0 \}$ which is a convex cone contained in $\LL$, and by Caratheodory's theorem
$\cpr M\le \dim {\cal L} ={{n+1}\choose 2}-k$.
\end{proof}

Thus for certain completely positive matrices on $\bd  \cs n$ we get the following bound on the cp-rank:

\begin{corollary}\label{bn-4}
For $n\ge 5$, if $A\in (\bd \cc n)\setminus \cob{\NN_n}$ , and $M\in \cs n$ is orthogonal to $A$, then $\cpr M\le b_n - 4$.
\end{corollary}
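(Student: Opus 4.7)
My plan is to replace $A$ by an \emph{extremal} copositive matrix $A'$ that is still orthogonal to $M$ and still lies outside $\NN_n$, and then to split into cases according to $\rank A'$. Since $\lk B\in \cc n:\du B M=0 \rk$ is a face of the pointed closed convex cone $\cc n$, one can decompose $A$ as a nonnegative combination of extremal rays of $\cc n$, each of them orthogonal to $M$. Because $A\notin \NN_n$, at least one of these extremal summands, call it $A'$, also lies outside $\NN_n$. By Theorem~\ref{locoprank}(b) the case $\rank A'=2$ is ruled out (every matrix in the orbit of $E_{12}$ lies in $\NN_n$), so either $\rank A'=1$ or $\rank A'\ge 3$.

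Suppose first $\rank A'=1$, so $A'=\x\x\T$ and $\x$ must have both positive and negative entries, for otherwise $A'\in\NN_n$. For any cp decomposition $M=\sum_j\v_j\v_j\T$, orthogonality forces $0=\du {A'}M=\sum_j(\x\T\v_j)^2$, hence $\x\T\v_j=0$ for every $j$. Therefore each rank-one term $\v_j\v_j\T$ lies in the subspace $\LL_\x:=\lk B\in\csn n:B\x=\o\rk$, which has dimension $\binom{n}{2}$. Carath\'eodory's theorem applied inside $\LL_\x$ (in the spirit of the proof of Theorem~\ref{bn-k+1}) then yields $\cpr M\le \binom{n}{2}=b_n-(n-1)\le b_n-4$, the last inequality using $n\ge 5$.

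Suppose now $\rank A'\ge 3$, and let $k$ denote the number of positive diagonal entries of $A'$. I claim $k\ge 5$; granted this, Theorem~\ref{bn-k+1} applied to $A'\perp M$ immediately gives $\cpr M\le b_n-k+1\le b_n-4$. To establish the claim, first $k\ge 1$, since a copositive matrix with all zero diagonal entries is automatically nonnegative, while $A'\notin\NN_n$. Permuting the positive diagonal entries into the first $k$ positions and applying Lemma~\ref{zeroext} yields $A'=S\oplus 0$ with $0\ne S\in\ext \cc k$ and all diagonal entries of $S$ positive. If $k\le 4$, Remark~\ref{rnkextco} forces $\rank S\in\lk 1,2\rk$: rank~$1$ would give $\rank A'=1$, contradicting $\rank A'\ge 3$, whereas rank~$2$ would place $S$ in the orbit of $E_{12}$ by Theorem~\ref{locoprank}(b) and hence force a zero entry on the diagonal of $S$, again a contradiction. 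Therefore $k\ge 5$, as claimed.

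The step I expect to require the most care is the rank-one case: one must upgrade the single scalar identity $\x\T M\x=0$ into the pointwise conditions $\x\T\v_j=0$ on every cp factor, which is precisely what shrinks the ambient subspace to dimension $\binom{n}{2}$ and makes the hypothesis $n\ge 5$ sharp (equality is attained at $n=5$). The rank-$\ge 3$ case, by comparison, is a clean combination of Lemma~\ref{zeroext} with the low-rank classification of extremal copositive matrices established in Theorem~\ref{locoprank}.
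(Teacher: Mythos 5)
Your proof is correct and follows essentially the same route as the paper: reduce to an extremal $A'\perp M$ outside $\NN_n$, show via Lemma~\ref{zeroext} and the low-rank classification that $A'$ must have at least five positive diagonal entries unless it is positive semidefinite, and then invoke Theorem~\ref{bn-k+1}. The only cosmetic differences are that the paper disposes of the rank-one (positive-semidefinite) case via $M\x=\o\Rightarrow\rank M\le n-1\Rightarrow\cpr M\le b_{n-1}=b_n-n$ instead of your Carath\'eodory argument in the subspace $\lk B\in\csn n : B\x=\o\rk$, and that it excludes $1\le k\le 4$ by observing $S\in\PP_k+\NN_k$, hence $A'\in\PP_n+\NN_n$, contradicting extremality, rather than citing Remark~\ref{rnkextco} and Theorem~\ref{locoprank}(b).
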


\begin{proof}
We may assume $A\in \ext \cc n$.  If $A$ is positive-semidefinite it follows
from orthogonality and $M\succeq 0$ that $\rank(M)\le n-1$ and
thus, by~\reff{babe}, $\cpr(M)\le b_{n-1}\le b_n-4$. We now assume that $A$ is \corr{indefinite}.
Let $k$ be the number of positive diagonal elements of $A$. If $k=0$, we would get $A\in \NN_n$ which we ruled out by assumption.
   If $1\le k\le 4$, then by Lemma~\ref{zeroext}, we get, up to permutations of rows and columns, $A={  S}\oplus 0$  where $ S$ is  a copositive matrix of order  $k$.
  So $S\in \PP_k+\NN_k$, and therefore
$A\in \PP_n+\NN_n$. But then, by extremality, $A$ is  either positive-semidefinite or nonnegative, in contradiction to our assumptions. Thus $k\ge 5$,
 and by Theorem \ref{bn-k+1} we get $\cpr M\le b_n-4$.
\end{proof}

It is beneficial to introduce a cp-rank bound for   positive matrices on the boundary of $\cs n$: \cog{Let
$p_n^* : = \max\lk \cpr M : M\in \bd \cs n\, , \; \min\limits_{i,j} M_{ij} > 0\rk$.}

\begin{thm}\label{pnbybd}
For $n\ge 2$, there exists a   positive matrix $M\in (\bd \cs n)\setminus (\bd\NN_n) $ such that
\[p_n\le \cpr M + 1\, .\]
\cor{Hence we get
\beq{pnstar} p_n^* \le p_n \le p_n^* +1\, ;\eeq the right inequality is an equality for \cob{$n\in\lk 2, 3\rk$} whereas the left inequality is an equality for  $n\in\lk 4, 5\rk$.}
\end{thm}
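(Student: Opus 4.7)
The plan is to produce the required $M$ by perturbing a positive interior matrix of maximal cp-rank along a very cheap direction until it lands on the boundary, while keeping it strictly positive throughout. The existence of such an interior matrix is guaranteed by Theorem~\ref{gener-cpr}: taking $k=p_n$ shows $\inn{\cs{n,p_n}}\ne\emptyset$, and since $\inn{\cs{n,p_n}}\subseteq\inn{\cs n}$ while matrices in $\inn{\cs n}$ are strictly positive, any $M_0\in\inn{\cs{n,p_n}}$ is positive with $\cpr M_0=p_n$. I will fix any index $k\in\{1,\ldots,n\}$ and consider the one-parameter family $M(t):=M_0-t\e_k\e_k\T$, $t\ge 0$, together with $t^*:=\sup\{t\ge 0:M(t)\in\cs n\}$. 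The lower bound $t^*>0$ comes for free because $M_0$ is an interior point.

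The heart of the argument will be the upper bound $t^*<M_{0,kk}$. At $t=M_{0,kk}$ the matrix $M(t)$ has a vanishing $(k,k)$-entry while its off-diagonal entries in row $k$ remain positive, and such a matrix cannot be completely positive: in any cp-decomposition $X=\sum_j\v_j\v_j\T$ with $X_{kk}=0$, the equation $\sum_j v_{j,k}^2=0$ forces $v_{j,k}=0$ for every $j$, and hence $X_{ki}=0$ for all $i$. Together with closedness of $\cs n$, this yields $t^*<M_{0,kk}$. Setting $M:=M(t^*)$, by construction $M\in\bd\cs n$; its off-diagonal entries agree with those of $M_0$, and $M_{kk}=M_{0,kk}-t^*>0$, so $M$ is strictly positive and hence $M\in(\bd\cs n)\setminus(\bd\NN_n)$. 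Subadditivity of cp-rank together with $\cpr(t^*\e_k\e_k\T)=1$ then gives $p_n=\cpr M_0=\cpr(M+t^*\e_k\e_k\T)\le\cpr M+1$, which is the first assertion. The bracket~\reff{pnstar} is immediate: $p_n^*\le p_n$ holds by definition, and $p_n\le\cpr M+1\le p_n^*+1$ since $M$ is a positive boundary matrix.

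For the equality statements I will argue case by case. When $n\in\{2,3\}$, the identity $\cs n=\PP_n\cap\NN_n$ forces any positive $M\in\bd\cs n$ to lie in $\bd\PP_n$ (because a positive matrix is already in $\inn\NN_n$), hence to have rank at most $n-1$; combined with the standard fact that a completely positive matrix of rank at most $2$ has cp-rank equal to its rank, this yields $p_n^*\le n-1=p_n-1$, with the bound attained by $J_2=\e\e\T$ for $n=2$ and by any positive singular rank-two cp matrix for $n=3$. For $n\in\{4,5\}$, the equality $p_n^*=p_n$ requires a positive matrix in $\bd\cs n$ of cp-rank $p_n\in\{4,6\}$; suitable examples are available from the Drew--Johnson--Loewy constructions for $n=4$ and from~\cite{Shak13} for $n=5$. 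The one genuine obstacle in the whole argument is the estimate $t^*<M_{0,kk}$: this is where the distinguishing structural feature of complete positivity --- no isolated diagonal zeros --- must be invoked; once it is in hand, both the strict positivity of $M$ and the cp-rank bound follow mechanically.
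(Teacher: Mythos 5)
Your proposal is correct and takes essentially the same route as the paper's proof: both start from an interior matrix of maximal cp-rank (automatically positive), subtract a multiple of $\e_k\e_k\T$ until the boundary of $\cs n$ is reached, check that the resulting boundary matrix stays positive because a completely positive matrix cannot have a zero diagonal entry alongside nonzero off-diagonal entries in that row, and conclude $p_n\le \cpr M+1$ by subadditivity of the cp-rank. The bracket \reff{pnstar} and the equality cases are also settled the same way --- via singularity and the bound $b_2=2$ for $n\in\lk 2,3\rk$, and via citing known positive boundary examples of cp-rank $p_n$ for $n\in\lk 4,5\rk$.
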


\begin{proof}
Let $\overline  M\in  \inn{{ \cs n }}$ be a matrix such that $\cpr \overline  M=p_n$~\cite[Cor.\,2.5]{Shak13}. Let $\delta >0$ be such that $M=\overline  M-\delta \e_n\e_n\T\in \bd \cs n$. Clearly, $p_n=\cpr \overline  M\le \cpr M + 1$.
Since $M$ has
positive off-diagonal entries in the last row and it is positive-semidefinite, \cog{we have} $M_{nn}>0$ and thus $M$ is positive, \cor{so that $\cpr M \le p_n^*$. Hence  $p_n^* \le p_n \le p_n^* +1$.
The last assertions follow from $b_2 = 2=p_3-1$ and from the fact that there are singular   positive matrices $M\in\bd \cs n$ with $\cpr M =p_n$ for $n\in\lk 4, 5\rk$~\cite[Rem.\,2.1,Cor.\,4.1]{Shak13}.} \end{proof}

\begin{remark}
For $n\le 4$, \cor{the matrix} $M$ in Theorem \ref{pnbybd} is necessarily singular. Thus for $n=2$  we have $\rank M=1$ and $\cpr M=1$, which yields $p_2\le 2$, a bound which is tight. If \cor{$n\in \lk 3,4\rk$ then} $\rank M= n-1$, and thus $\cpr M\le b_{n-1}$, and $p_n\le {n\choose 2}$. For $n=3$ this gives $p_3\le 3$, which is also a tight bound. But for \ignore{$n\in \lk 4,5\rk$} \cog{$n=4$  this yields $p_4\le 6$, which is not tight. For $n=5$, it turns out that \cogg{though} $p_5=p_5^*$ is attained at a singular matrix \cite[Cor.\,4.1]{Shak13}, \corr{we have} $p_5=6<{5\choose 2} =10$.} Still, for $n\ge 6$ we get an improvement of the known bound $p_n\le b_n$, and for $n=6$ we will improve it further below.
\end{remark}

\cogg{By the above, for the first time we have a proof that $b_n$ is not a tight upper bound on the cp-rank of completely positive matrices of \corr{any} order $n \ge 5$. More precisely: }

\begin{corollary}\label{pnlebn-3}
For $n\ge 5$, \cor{we have}
$$\cor{p_n^* \le b_n-4\quad\mbox{and}\quad} p_n\le b_n-3\, .$$
\end{corollary}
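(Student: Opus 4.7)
The plan is to derive both inequalities from Corollary \ref{bn-4} by combining the ``strictly positive witness'' viewpoint of Theorem \ref{pnbybd} with an elementary orthogonality observation: a matrix $M\in\bd\cs n$ with $\min_{i,j}M_{ij}>0$ forces any dual certificate $A\in\bd\cc n$ with $\langle M,A\rangle=0$ to fall outside $\NN_n$.

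My first step would be to establish $p_n^*\le b_n-4$. Fix an arbitrary positive $M\in\bd\cs n$. Because $M$ lies on the boundary of the closed convex cone $\cs n$ and $\cc n$ is its dual, there is some nonzero $A\in\bd\cc n$ with $\langle M,A\rangle=0$. Since $M$ has all entries strictly positive, $\langle M,A\rangle=\sum_{i,j}M_{ij}A_{ij}$ is a sum of nonnegative terms in which any $A_{ij}>0$ would already force the total to be positive; thus $A\in\NN_n$ together with $A\neq 0$ is impossible, and consequently $A\in(\bd\cc n)\setminus\NN_n$. Corollary \ref{bn-4} then immediately yields $\cpr M\le b_n-4$, and taking the supremum over all positive $M\in\bd\cs n$ gives $p_n^*\le b_n-4$.

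The second step is straightforward: Theorem \ref{pnbybd} provides the inequality $p_n\le p_n^*+1$, and combining this with the first step gives $p_n\le(b_n-4)+1=b_n-3$.

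There is no real obstacle here; the corollary is essentially a bookkeeping combination of the two earlier results. The only substantive point is the verification that the separating copositive matrix $A$ cannot lie in $\NN_n$, and this follows at once from the entrywise positivity of $M$ coupled with $A\neq 0$. The hypothesis $n\ge 5$ is inherited from Corollary \ref{bn-4} (and ensures existence of positive matrices on $\bd\cs n$, as guaranteed by Theorem \ref{pnbybd}), so no additional case analysis is required.
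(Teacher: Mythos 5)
Your proposal is correct and follows essentially the same route as the paper: positivity of $M$ on $\bd\cs n$ forces any nonzero orthogonal $A\in\bd\cc n$ out of $\NN_n$, Corollary \ref{bn-4} gives $\cpr M\le b_n-4$, and Theorem \ref{pnbybd} converts $p_n^*\le b_n-4$ into $p_n\le b_n-3$. The only cosmetic difference is that the paper picks a positive boundary matrix attaining $p_n^*$ while you bound an arbitrary positive boundary matrix and take the supremum, which is equivalent.
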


\begin{proof}
Let $M\in(\bd  \cs n) \setminus (\bd \NN_n)$ be a   positive matrix with $\cpr M =p_n^*$.
\ignore{
If $M$ is singular, then $\cpr M\le b_{n-1}$ by~\reff{babe} and thus $\cpr M \le  b_n-4$ \cob{(since $n\ge 5$)}.
If $M$ is nonsingular, then it is orthogonal to a matrix $A\in \ext \cc n$, which has \cob{a negative entry}. }
This matrix \cob{$M$ is  orthogonal to a matrix $A\in \ext \cc n$. As $M$ is   positive, $A\not\in\NN_n$}.
By Corollary~\ref{bn-4}, we have $\cpr M\le b_n-4$, and $p_n\le b_n-3$ by Theorem~\ref{pnbybd}.
\end{proof}

\begin{remark}\label{posdiag}
For $n=6$ this bound is $p_6\le \cor{17}$, but may be slightly improved.
This case is studied in~Section~\ref{psix}.
\end{remark}

Beforehand we note a further result valid for arbitrary order.

\begin{thm}\label{zerobd}
If  $M\in \cs n$ has a zero entry, i.e, $M\in \bd {\mathcal N}_n$, then
$$\cpr M \le 2p_{n-1}\, .$$
\end{thm}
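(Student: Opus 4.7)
The plan is to split into cases on whether $M$ has a zero on its diagonal or only off-diagonal zeros, and in the off-diagonal case to split any cp-decomposition of $M$ into two pieces, each of which effectively lives in $\cs{n-1}$.

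First, suppose $M_{ii}=0$ for some $i$. Writing $M=VV\T$ with $V\in\R^{n\times p}_+$ forces the $i$-th row of $V$ to be zero, so the $i$-th row and column of $M$ are identically zero. Deleting them gives a matrix $M'\in \cs{n-1}$ with the same cp-rank as $M$, and hence $\cpr M=\cpr M'\le p_{n-1}\le 2p_{n-1}$.

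Next, suppose $M_{ij}=0$ for some $i\neq j$ (all diagonal entries being positive -- otherwise the previous case applies). Fix any cp decomposition $M=\sum_{k=1}^p \v_k\v_k\T$. From $0=M_{ij}=\sum_k (\v_k)_i(\v_k)_j$ and nonnegativity of the summands, each term vanishes, so the index set $\lk 1,\dots,p\rk$ splits into
$$S_1=\lk k:(\v_k)_i=0\rk\quad\text{and}\quad S_2=\lk 1,\dots,p\rk\setminus S_1\subseteq \lk k:(\v_k)_j=0\rk.$$
Set $M^{(1)}=\sum_{k\in S_1}\v_k\v_k\T$ and $M^{(2)}=\sum_{k\in S_2}\v_k\v_k\T$. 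By construction $M^{(1)}$ has a zero $i$-th row and column, and $M^{(2)}$ has a zero $j$-th row and column, so each is (after deletion of the zero row/column) a completely positive matrix of order $n-1$. Hence $\cpr M^{(\ell)}\le p_{n-1}$ for $\ell=1,2$.

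Finally, by subadditivity of the cp-rank under the cp-sum $M=M^{(1)}+M^{(2)}$, we obtain
$$\cpr M\le \cpr M^{(1)}+\cpr M^{(2)}\le 2p_{n-1}\,,$$
which is the desired inequality. There is no genuine obstacle here: the only point that requires a moment's thought is to observe that the off-diagonal zero forces every rank-one summand in \emph{any} cp decomposition of $M$ to be supported either outside row $i$ or outside row $j$, which is precisely what makes the splitting work.
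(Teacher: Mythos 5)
Your proof is correct and follows essentially the same route as the paper: the paper also fixes an off-diagonal zero (WLOG $M_{1n}=0$), splits an arbitrary cp decomposition according to whether each rank-one term has a positive first coordinate, observes that the two resulting pieces live in $\cs{n-1}$ after deleting a zero row and column, and concludes by subadditivity of the cp-rank. Your separate treatment of a diagonal zero is harmless but not needed, since a zero diagonal entry of a completely positive matrix forces its entire row and column to vanish, reducing to the off-diagonal case.
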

\bep \cob{We may and do suppose that $M_{1n}=0$.}
\ignore{Consider again the decomposition~\reff{zerodecomp}. , and this implies $M_+(V) =  {M'}\oplus 0$ for some $M'\in \cs{(n-1)}$, so that $\cpr M_+(V) = \cpr M' \le p_{n-1}$.}
Let $M=\sum_{i=1}^p \w_i\w_i\T$  be a cp-\cog{decomposition} of $M$. Define
 $\Omega_1 :=\{i\in \{1, \ldots, p\} : 1\in \supp {\w_i}\}$ and  $\Omega_2:=\{1, \ldots, p\}\setminus \Omega_1$, as well as $M_j:=\sum_{i\in \Omega_j}\w_i\w_i\T$, for $j=1,2$. Then $M_1= {M_1'}\oplus 0$ and $M_2=  0\oplus {M_2'}$, where $M_1', M_2'$ are matrices in $\cs {n-1}$.
The result follows from $\cpr M=\cpr (M_1+M_2) \le \cpr M_1 + \cpr M_2= \cpr M_1'+ \cpr M_2'\le 2p_{n-1}$.
 \ep

\cor{ \begin{remark}
Note that for all $n\ge 6$, we have $2d_{n-1} \le b_n - 3$, the bound from Corollary~\ref{pnlebn-3}. However, compared with the upper bound from Theorem~\ref{bn-k+1}, we have $2p_{n-1} \ge 2d_{n-1} > b_n - k +1$, whenever $k>\frac n2$, so that Theorem~\ref{zerobd} is interesting only for small $k$.
 \end{remark}}

\section{Cp-rank of matrices of order six}\label{psix}
\cog{In this section we improve the upper bound on the cp-rank of completely positive matrices
of order 6.
First we consider matrices in $\bd \cs 6$ which are orthogonal to  $ S\oplus 0$, where $S\in \ext\cc 5$ is either
 in the orbit of the $5\times 5$ Horn matrix $H$ or a Hildebrand matrix.} \ignore{Below, we use the symbol
 `` $\corr{\boxplus}$ '' to denote the sum modulo 5
 of two elements in $\{1, 2, 3, 4, 5\}$.} Below, the sum of two elements in $\{1, 2, 3, 4, 5\}$  is the sum modulo 5.

\begin{proposition}\label{hornarg}
Let $S$ be either in the orbit of the $5\times 5$ Horn matrix $H$ or a Hildebrand matrix. Suppose that
$M\in \cs {\cob{6}}$ is orthogonal to $ S\oplus 0$. Then $\cpr M \le 15$.
\end{proposition}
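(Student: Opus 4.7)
The plan is to combine a pentagonal description of the nonnegative zero set of $S$ (which is shared between Horn-orbit and Hildebrand representatives of $\ext \cc 5$) with the elementary bound $p_3=3$ for $3\times 3$ completely positive matrices. The first step will be to take an arbitrary cp-decomposition $M=\sum_{j=1}^p\x_j\x_j\T$ and observe that orthogonality $M\perp(S\oplus 0)$ forces $\x_j\T(S\oplus 0)\x_j=0$ for every $j$. Splitting each $\x_j\T=(\bar\x_j\T,\gamma_j)$ with $\bar\x_j\in \R^5_+$ and $\gamma_j\ge 0$, this reduces to $\bar\x_j\T S\bar\x_j=0$, so every first block $\bar\x_j$ lies in the nonnegative zero set $Z_S:=\{\y\in\R^5_+\,:\,\y\T S\y=0\}$.

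The next step, and the one I expect to be the main obstacle, is to establish a cyclic $5$-cone structure for $Z_S$: there exist extremal zeros $\u^{(1)},\ldots,\u^{(5)}\in\R^5_+$ of $S$ such that $Z_S=\bigcup_{t=1}^5\cone\{\u^{(t)},\u^{(t+1)}\}$ (indices mod $5$), each of the five cones being $2$-dimensional. For a general $S$ in the Horn orbit I would first conjugate by a positive-definite diagonal matrix and a permutation (both preserving cp-rank and the cone structure of $Z_S$) to reduce to $S=H$, and then verify the description directly: writing $\bar\x=\sum_i\alpha_i(\e_i+\e_{i+1})$ via the underlying invertible cyclic system and expanding, one gets $\bar\x\T H\bar\x = 4\sum_{\{i,j\}\text{ at cyclic distance }2}\alpha_i\alpha_j$, so vanishing forces the support of $(\alpha_i)$ to be an independent set in the distance-$2$ pentagon (which is itself a $5$-cycle), i.e.~either a single index or two cyclically consecutive indices; this yields the cones with $\u^{(i)}=\e_i+\e_{i+1}$. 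For Hildebrand matrices the analogous cyclic $5$-cone description of $Z_S$ has to be extracted from Hildebrand's classification of extremal copositive $5\times 5$ matrices and their zeros in \cite{Hild11}; this quotation is really the only non-trivial input from outside.

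Once the cone structure is in place, the finish will be straightforward bookkeeping. I will assign to each $j$ a type $t_j\in\{1,\ldots,5\}$ with $\bar\x_j\in\cone\{\u^{(t_j)},\u^{(t_j+1)}\}$ (arbitrarily if $\bar\x_j=\o$), and set $M^{(t)}:=\sum_{j\,:\,t_j=t}\x_j\x_j\T$, so that $M=\sum_{t=1}^5 M^{(t)}$. Letting $\hat\u^{(i)}\in\R^6_+$ denote $\u^{(i)}$ padded with a zero in the sixth coordinate, and $U_t:=[\hat\u^{(t)},\hat\u^{(t+1)},\e_6]\in\R^{6\times 3}_+$, every $\x_j$ with $t_j=t$ factors as $\x_j=U_t\y_j$ for some $\y_j\in\R^3_+$, so $M^{(t)}=U_tN^{(t)}U_t\T$ with $N^{(t)}:=\sum_{j\,:\,t_j=t}\y_j\y_j\T\in\cs 3$. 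Since $p_3=3$ by \reff{pnsmall}, we have $\cpr N^{(t)}\le 3$; decomposing $N^{(t)}=\sum_{k=1}^3\z_{t,k}\z_{t,k}\T$ with $\z_{t,k}\in\R^3_+$ and composing with the nonnegative matrix $U_t$ yields a cp-factorization $M^{(t)}=\sum_{k=1}^3(U_t\z_{t,k})(U_t\z_{t,k})\T$ of length at most $3$. Subadditivity of the cp-rank will then give $\cpr M\le\sum_{t=1}^5\cpr M^{(t)}\le 5\cdot 3=15$.

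In summary, the only non-routine ingredient is the cyclic $5$-cone description of $Z_S$: elementary for the Horn case (a direct cross-product computation plus the observation that the distance-$2$ pentagon is a $5$-cycle of independence number $2$), and a quotation from \cite{Hild11} for the Hildebrand case. Everything downstream is bucket-counting together with the trivial $3\times 3$ bound $p_3=3$.
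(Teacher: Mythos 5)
Your argument is essentially the paper's own proof: the paper also reduces everything to the fact that each column of a nonnegative factor of $M$ lies in one of five $3$-dimensional cones spanned by $\e_6$ and two cyclically consecutive extremal zeros of $S$ (citing \cite[Thm.\,4.4]{Shak13} for this, which covers both the Horn orbit and the Hildebrand case in one stroke), and then applies $p_3=3$ to each of the five buckets and sums to get $15$. The only weak point is your direct verification of the $5$-cone structure for $S=H$: writing $\bar\x=\sum_i\alpha_i(\e_i+\e_{i+1})$ via the invertible circulant gives coefficients $\alpha_i$ that need \emph{not} be nonnegative when $\bar\x\in\R^5_+$ (e.g.\ $\bar\x=\e_1$ has mixed-sign $\alpha$), so $\sum_{d(i,j)=2}\alpha_i\alpha_j=0$ does not by itself force the support of $\alpha$ to be an independent set; the standard fix is the complementarity argument $[H\bar\x]_i\ge 0$ with $\bar x_i[H\bar\x]_i=0$ (cf.\ \cite[Rem.\,3.2]{Shak13}), a short case analysis on $\supp{\bar\x}$, or simply the citation the paper uses.
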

\bep
If $M=VV\T$, $V\ge 0$, then
each column of $V$ is a nonnegative linear combination of three vectors, $\e_6$, $\e_i+\e_{i+ 1}$ and
$\e_{i+ 1}+\e_{i+ 2}$, for some $1\le i\le 5$~{\cob{\cite[Thm.\,4.4]{Shak13}}}.
Let \[W=[ \e_1+\e_2|\e_2+\e_3|\e_3+\e_4|\e_4+\e_5|\e_5+\e_1|\e_6]\, .\] Then $V=WX$, where each column of $X$ has support of at most $3$ elements, contained in
a set of the form $\{i, i+ 1,6\}$ with $1\le i\le 5$. For each such $i$, let $X_i$ consist of the columns of $X$ whose support is
contained in $\{i, i+ 1,6\}$.  Then, again up to permutations of rows and columns, $X_i X_i\T = {Y_i}\oplus 0$ with $Y_i\in \cs 3$ so that
$$\cpr X_i X_i\T = \cpr Y_i \le p_3= 3\, .$$
Therefore $\cpr XX\T = \cpr \sum\limits_{i=1}^5 X_iX_i\T \le  \cogg{\sum\limits_{i=1}^5 } \cpr X_iX_i\T\le 15$.
\ep

\begin{thm}
We have $p_6\le 15$.
\end{thm}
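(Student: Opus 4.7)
The plan is to bypass the suboptimal route through Corollary~\ref{pnlebn-3} (which only gives $p_6\le 17$) and instead invoke \cite[Thm.\,3.4]{Shak13} directly: $p_6$ is attained at some nonsingular $M\in\bd\cs 6$. Such an $M$ is either positive or has at least one zero entry (i.e.\ lies on $\bd\NN_6$), and I will dispose of these two alternatives separately.

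First, if $M\in\bd\NN_6$, Theorem~\ref{zerobd} immediately gives $\cpr M\le 2p_5=12$, comfortably below the target, since $p_5=6$.

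Next, suppose $M$ is positive. Since $M$ is nonsingular it does not lie in $\bd\PP_6$, so I can select a nonzero $A\in\ext\cc 6$ with $M\perp A$. Positivity of $M$ forces $A\notin\NN_6$ (otherwise $\langle M,A\rangle>0$), and nonsingularity of the PSD matrix $M$ forces $A\notin\PP_6$ (otherwise the PSD-trace-pairing forces $A=0$). I then split on the diagonal of $A$. If all six diagonal entries of $A$ are positive, Theorem~\ref{bn-k+1} with $n=k=6$ already delivers $\cpr M\le b_6-5=15$. If instead some $A_{ii}$ vanishes, then after a permutation (which does not affect $\cpr M$ or any of the relevant cone memberships) Lemma~\ref{zeroext} reduces $A$ to block form $S\oplus 0$ with $0\ne S\in\ext\cc m$ and $m<6$. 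From $A\notin\PP_6\cup\NN_6$ I deduce $S\notin\PP_m\cup\NN_m$, and Remark~\ref{rnkextco} (built on Theorem~\ref{locoprank}) therefore rules out $m\le 4$. Hence $m=5$ and $S$ is either in the Horn orbit or a Hildebrand matrix; Proposition~\ref{hornarg} then finishes this sub-case with $\cpr M\le 15$.

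Assembling the cases yields $p_6=\cpr M\le 15$. The main obstacle is not any single calculation but the clean case split on $M$ (positive vs.\ having a zero) and on the diagonal of its copositive witness $A$; the most delicate point is the zero-diagonal sub-case, where Lemma~\ref{zeroext} together with Remark~\ref{rnkextco} cooperate to reduce the problem to an extreme $5\times 5$ copositive matrix of known Horn-or-Hildebrand type, thereby circumventing the absence of any classification of $\ext\cc 6$.
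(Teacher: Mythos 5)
Your proof is correct and follows essentially the same route as the paper: attain $p_6$ at a nonsingular boundary matrix via \cite[Thm.\,3.4]{Shak13}, dispose of the zero-entry case with Theorem~\ref{zerobd}, use Theorem~\ref{bn-k+1} when the orthogonal extreme copositive witness has a fully positive diagonal, and otherwise reduce via Lemma~\ref{zeroext} to a $5\times 5$ Horn-or-Hildebrand block handled by Proposition~\ref{hornarg}. The only cosmetic difference is that you exclude the small blocks through Remark~\ref{rnkextco} and the exclusion $S\notin\PP_m\cup\NN_m$, whereas the paper routes the same fact through the rank bound of Corollary~\ref{rkthree}; both rest on Theorem~\ref{locoprank}.
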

\bep
By \cite[Thm.\,3.4]{Shak13}, we know that $p_6 = \cpr M$ for some $M\in (\bd \cs 6 )\setminus (\bd \PP_6)$. Moreover, if $M$ had a zero entry, we get from Theorem~\ref{zerobd} that $\cpr M \le 2p_5 = 12$. Suppose now that $M\in (\bd \cs 6) \setminus \left (\bd {\PP}_6 \cup \bd {\NN}_6\right )$. Then Corollary~\ref{rkthree} gives $\rank A \ge 3$ for all \cop{$A\in  (\ext \cc   6)\cap M^\perp$}, and at least one such $A\neq 0$ exists as $M\in\bd \cs 6$. 
\cog{Now either all diagonal elements of $A$ are 
positive,
in which case by Theorem \ref{bn-k+1} $\cpr M\le b_6-5=15$, or  $A$ has at least one zero on the diagonal. 
By Lemma~\ref{zeroext},} 
$A= S\oplus 0$ with $S\in \ext \cc 5$. Since  $\rank S= \rank A \ge 3$, we conclude that $S$ is either in the orbit of $H$ or a Hildebrand matrix. Then Proposition~\ref{hornarg} gives $\cpr M \le 15$, and the claim is proved.
\ep

\cogg{We thus cut the bracket for $p_6$ in about half, since $b_6=20$ and $d_6=9$.}
\corr{The same argument could be used also for $n\in\lk 7,8\rk$, but it would not further improve upon the bounds yielded already by the general improvement in Corollary~\ref{pnlebn-3}.}

\medskip

\bibliographystyle{plain}
\frenchspacing \small


\end{document}